%BeginFileInfo
%%Publisher=ARXIV
%%Project=AOP
%%Manuscript=AOP673
%%Stage=
%%TID=sandra.matulyte
%%Format=latex
%%Distribution=arXiv
%%Destination=DVI
%%DVI.Maker=arXiv_tex_dvi
%EndFileInfo
%
% Institute of Mathematical Statistics (IMI)
% Journal "The Annals of Probabability"

%secthm,seceqn,secfloat,nameyear,number,noautosecdot
\documentclass[aop,MSNbibl,citesort,dvips]{arximspdf}
\usepackage{graphicx}

% settings
%

% article settings
\doi{10.1214/11-AOP673}
\volume{40}
\issue{5}
\pubyear{2012}
\firstpage{2197}
\lastpage{2235}

\makeatletter

\newcommand{\tr}{\operatorname{tr}}
\newcommand{\rr}{\mathbb{R}}
\newcommand{\cp}{\mathbb{C}}
\newcommand{\im}{\operatorname{Im}}

\newcommand{\var}{\operatorname{Var}}
\newcommand{\ecount}{\mathcal{N}}
\newcommand{\dsc}{f_{\mathrm{sc}}}
\newcommand{\ltwo}{\mathbb{L}^2}
\newcommand{\one}{\mathbf{1}}

\newcommand{\tree}{\mathbb{T}}
\newcommand{\leaf}{\mathrm{leaf}}
\newcommand{\Root}{\mathrm{root}}

\newtheorem{theorem}{Theorem}
\newtheorem{lemma}[theorem]{Lemma}
\newtheorem{prop}[theorem]{Proposition}

\newproclaim{defn}{Definition}
\newproclaim{rmk}{Remark}

\makeatother

\begin{document}
\begin{frontmatter}

\title{Sparse regular random graphs: Spectral density and eigenvectors}
\runtitle{sparse regular random graphs}

\begin{aug}
\author[A]{\fnms{Ioana} \snm{Dumitriu}\corref{}\thanksref{t2}\ead[label=e1]{dumitriu@math.washington.edu}}
and
\author[B]{\fnms{Soumik} \snm{Pal}\thanksref{t3}\ead[label=e2]{soumik@math.washington.edu}}

\runauthor{I. Dumitriu and S. Pal}
\affiliation{University of Washington}
\address[A]{University of Washington\\
C-342 Padelford Hall\\
Seattle, Washington 98195\\
USA\\
\printead{e1}} %adresu isvedimo komanda gale!
\address[B]{University of Washington\\
C-547 Padelford Hall\\
Seattle, Washington 98195\\
USA\\
\printead{e2}}
\end{aug}

\thankstext{t2}{Supported by NSF CAREER Award DMS-08-47661.}
\thankstext{t3}{Supported in part by NSF Grant DMS-10-07563.}

% HISTORY:
\received{\smonth{11} \syear{2009}}
\revised{\smonth{4} \syear{2011}}

% ABSTRACT
%
\begin{abstract}
We examine the empirical distribution of the eigenvalues and the
eigenvectors of adjacency matrices of sparse regular random graphs. We
find that when the degree sequence of the graph slowly increases to
infinity with the number of vertices, the empirical spectral
distribution converges to the semicircle law. Moreover, we prove
concentration estimates on the number of eigenvalues over progressively
smaller intervals. We also show that, with high probability, all the
eigenvectors are delocalized.
\end{abstract}

% KEYWORDS
%
\begin{keyword}[class=AMS]
\kwd[Primary ]{60B20}
\kwd[; secondary ]{60C05}
\kwd{05C80}.
\end{keyword}
\begin{keyword}
\kwd{Random regular graphs}
\kwd{spectral distribution}
\kwd{universality}
\kwd{semicircle law}.
\end{keyword}

\end{frontmatter}

%s1 ###
\section{Introduction}

Consider the uniform distribution over the space of all labeled simple
graphs on $n$ vertices where every vertex has degree $d$. We denote a
graph randomly selected from this distribution by $G(n,d)$; the
vertices of $G(n,d)$ will always be labeled by $\{1,2,\ldots,n\}$.

Now, consider a sequence of such random graphs $\{G(n,d_n), n \in
\mathbb{N} \}$ which are $d_n$-regular of order $n$. We assume $d_n$
to be slowly growing with $n$ in a manner which will be made more
precise later. Consider the adjacency matrix $A_n$ of $G(n,d_n)$; the
$(i,j)$th element of $A_n$ is one or zero depending on whether there is
an edge between vertices $i$ and $j$ in the graph $G(n,d_n)$. The
random matrix $A_n$ is always symmetric, and it has $n$ real
eigenvalues (perhaps not all distinct) and corresponding real
eigenspaces. Under appropriate conditions on the growth of the sequence
$d_n$, we study the following phenomena as $n$ tends to infinity:
\begin{longlist}[(iii)]
\item[(i)] Global semicircle law. We prove the convergence of the
empirical spectral distribution (ESD) of the scaled adjacency matrix to
the probability measure on $[-2,2]$ with density
%
%e1 ###
%
\begin{equation}\label{whatisfsc}
\dsc(x) = \frac{1}{2\pi} \sqrt{4 - x^2},\qquad -2 < x < 2.
\end{equation}
\item[(ii)] Local semicircle law (a.k.a. the semicircle law on short
scales). We obtain concentration estimates of the deviation of the
number of eigenvalues~$\ecount_I$ that lie in a small interval $I$
from its predicted number $n \int_I \dsc(x) \,dx$. The size of $I$
will be taken to be vanishing at an appropriate rate with increasing~$n$.

\item[(iii)] Delocalization of eigenvector coordinates. We obtain
probability estimates of the event that, for some eigenvector, a few of
the coordinates are significantly larger in magnitude than the rest.
\end{longlist}

These problems connect two areas of study: Wigner random matrices and
spectra of sparse random graphs; since we have already mentioned the
latter, we will now talk about the former.

In the last couple of decades there has been an enormous amount of
activity in the study of universal properties of random matrices,
inspired by their connection to (universal) physical systems. The
literature on universality studies in random matrices is vast; we
mention here only a few references and ask the reader to look to them
for further ones.

For an introduction and motivation to the subject, we recommend Deift's
ICM address~\cite{deift06a}. Of particular interest are the Wigner
matrices (see Bai~\cite{baiconv}, Soshnikov~\cite{soshuniv}, Bai and
Yao~\cite{baiyao05}, Khorunzhy, Khoruzhenko and Pastur
\cite{khokhopastur}, Guionnet and Zeitouni~\cite{guionnetzeitouni},
Ben Arous and Peche~\cite{benarouspeche}, Tao and Vu
\cite{taovu,taovu2}). Deift and Goiev~\cite{deiftgioev} looked at different
potential functions on symmetric, Hermitian and self-dual matrices;
Baik and Suidan explored connections to percolation
\cite{baiksuidan05} and random walks~\cite{baiksuidan07};
$\beta$-generalizations of the classical ensembles and universal
properties thereof have been explored in Forrester and Baker
\cite{Forresterpoly}, Johansson~\cite{johanssoncltherm}, Dumitriu
and Edelman~\cite{dumitriu06}. Recently, more
%``sofisticated''}
sophisticated probability tools have been generalized
and applied to random matrix theory (e.g., the Lindeberg Principle, by
Chatterjee~\cite{chatterjee} and Tao and Vu~\cite{taovu,taovu2}).

Most of the focus in universality research has been on proving, under
progressively weaker assumptions on the entry distribution, the following:
\begin{itemize}
\item[-] convergence of the ESD to the semicircle or Mar\v
{c}enko--Pastur laws and establishing rates of convergence in various
ways (large deviations, concentration estimates, central limit
theorems). For a comprehensive treatment of the subject, see the books
by Bai and Silverstein~\cite{baisil} and by Anderson, Guionnet and
Zeitouni~\cite{agzbook};
\item[-] fluctuations of the spectrum at the edge (the famous
Tracy--Widom laws~\cite{twl,twuniv1,twuniv2}) for general Wigner
matrices, settled by Tao and Vu~\cite{taovu2} and Erd\H{o}s et
al.~\cite{ERSTVY};
\item[-] universality of correlation functions in the bulk, under
various assumptions (Tao and Vu~\cite{taovu}, Erd\H{o}s et al. \cite
{ERSTVY}, Erd\H{o}s et al.~\cite{eprsy}, Erd\H{o}s, Yau and Yin
\cite{eyy});
\item[-] partial or complete delocalization of the eigenvectors (by
Erd\H{o}s, Schlein and Yau~\cite{esy1,esy2} and Tao and Vu
\cite{taovu,taovu2}).
\end{itemize}

%drop mentioning wishart

%For Wishart-like sample covariance models, Tao and Vu established
%universality of correlation functions, as well as delocalization of
%eigenvectors, in~\cite{taovu4}. The distribution of the principal
%eigenvector and its connections to the Brownian Bridge is studied in
%Bai, Miao, and Pan~\cite{buymeapan}.

The aim of said research has been to show that the spectral statistics
agree in the large $n$ limit to the spectral statistics of the Gaussian
Orthogonal Ensemble (GOE) and the Gaussian Unitary Ensemble (GUE),
depending on whether the matrices are real symmetric/positive-definite,
or complex Hermitian/positive-definite. The Gaussian and Wishart
ensembles are some of the most studied and best understood random
matrix models; for an easy introduction to classical random matrix
theory, see the books by Mehta~\cite{mehtabook} and Muirhead \cite
{muirhead82a}.

Parallel to these developments, in combinatorics and discrete
mathematics, there has always been an interest in studying spectral
properties of deterministic and random graphs. There are two matrices
of interest in spectral graph theory: the \textit{adjacency} (sometimes
called the \textit{incidence}) matrix, which we already defined, and the
\textit{Laplacian} matrix. These matrices are the same for regular
graphs (although, in general, they can be quite different), and the
spectrum of the graph is the spectrum of the matrix.

%drop some history of random graphs

%The study of random graphs was pioneered by Erd\H{o}s and Ren\'{y}i
%problems in graph theory; shortly thereafter, a number of authors,
%including Ford and Uhlenbeck~\cite{fu57}, Gilbert~\cite{g59}, Austin,
%Fagen, Penny, and Riordan~\cite{afpr59}, developed the study of random
%graphs. For a good introduction to the area, we refer to the
%well-known book by Bollob\'{a}s~\cite{bollobasbook}.

%Apart from the classical Erd\H{o}s-Ren\'{y}i model and random regular
%graphs, in recent times the real-world networks (such as the internet)
%provided scientists with a bevy of data. This spurred the study of
%such models as random graphs with fixed degree distribution (e.g., the
%models by Molloy and Reed~\cite{molloyreed}), the preferential
%attachment models (e.g., the Barab\'{a}si and Albert model
%the book by Durrett~\cite{durrettbook}.

Among the properties of random graphs that have been the focus of
intense research are connectivity, phase transitions and the limiting
spectral distribution of random graphs, including trees (McKay
\cite{mckay81}, Feige and Ofek~\cite{feigeofek05}, Mirlin and Fyodorov
\cite{mirlinfyod91}, Bauer and Golinelli~\cite{bauergolinelli01},
Semerjian and Gugliandolo~\cite{semerjian02}, Bordenave and Lelarge
\cite{bordlelarge}, Bhamidi, Evans and Sen~\cite{bhamidievanssen}).
Other properties include concentration of eigenvalues (Krivelevich and
Sudakov~\cite{krivsudakov03}, Alon, Krivelevich and Vu \cite
{alonkrivvu}), the spectral gap (F\H{u}redi and Koml\'{o}s \cite
{furedikomlos}, Friedman~\cite{friedmane2} and Friedman and Alon \cite
{friedmanalon}, Broder and Shamir~\cite{brodershamir}).

Another area of recent interest is the study of quasi-random graphs and
expanders. These are nonrandom graphs which display properties one
expects to hold with high-probability for certain classes of random
graph models. For example, expanders are sparse graphs that have high
connectivity properties (e.g., a~large spectral gap). These graphs are
often regular (e.g., the famous Ramanujan graph, described in the
seminal articles by Lubotzky, Phillips and Sarnak~\cite{LPS} and
Morgenstern~\cite{morgenstern}). Random $d$-regular graphs display the
same connectivity properties with very high probability, when $d$ is
kept fixed and the order is large; this is in essence the Alon
conjecture, recently settled by Friedman~\cite{friedmanalon}. Thus a
study of random regular graphs suggests possible properties of
(deterministic) expanders.

It is easy for a probability audience to appreciate the importance of
studying eigenvalues of the graph (e.g., the spectral gap which
determines the mixing properties of a random walk), but eigenvectors of
graphs are equally important, especially since they are the solutions
of various combinatorial optimization problems. Traditionally, there
has been much less work on computing the \textit{actual} graph
eigenvector distributions,
%, in the past two years the delocalization of Wigner and Wishart
%eigenvector matrices has been
with the notable and recent exception of Wishart-like sample covariance
matrices (see Bai, Miao and Pan~\cite{buymeapan}). Thus,
developments in examining properties of the eigenvectors of large
random graphs (as in Friedman~\cite{friedmannodal} and Dekel, Lee and
Linial~\cite{dekelleelinial}) are relatively new, and motivated by the
applications of eigenvectors to engineering and computer science. Such
applications include the Google page-rank algorithm~\cite{pagerank},
the Shi--Malik algorithm~\cite{shimalik00}, the Meila--Shi algorithm
\cite{meilashi} and other spectral clustering techniques and related
segmentation problems (Weiss~\cite{weiss99}, Pothen, Simon and Liou
\cite{pothensimonliou}, etc.).

It is probably clear by now that the two fields of research that we
have very briefly sketched here (universality studies in random matrix
theory and spectra of random graphs) are
vast and, by examining the two lists of important problems we have
outlined, one can see that there is a certain amount of overlap.
Naturally, this lead to a few papers where the two fields have
intersected, despite differences in both the goals and the methodology
of each.

A famous such example is McKay's derivation of the limiting empirical
spectrum of random $d$-regular graphs on $n$ vertices, as $d$ is fixed
and $n$ grows to infinity~\cite{mckay81}. In that case, the empirical
spectral distribution converges in probability to what is known as the
McKay (or Kesten--McKay) law, which has a density
%
%e2 ###
%
\begin{equation}\label{whatismckayden}
f_d(x) = \frac{d\sqrt{4(d-1)-x^2}}{2\pi(d^2 - x^2)},\qquad -2\sqrt
{d-1} \le x \le2\sqrt{d-1}.
\end{equation}
This density had appeared earlier in Kesten's work on random walks
on
groups~\cite{kesten}. It can be easily verified that as $d$ grows to
infinity, if we normalize the variable $x$ in the above by $\sqrt
{d-1}$, the resulting density converges to the semicircle law on $[-2,2]$.

This naturally raised the question of whether the study of
``universal'' properties could be pushed into the domain of regular
random graphs with increasing degree. The rate of growth of the degree
sequence plays an important role, since at both extremes ($d$ fixed and
$d = n-1$) the ESDs do not converge to the semicircle law.

The answer to this question turns out to be difficult. There
are a number of major obstacles to developing an applicable
universality theory in the spirit of Wigner random matrices to
adjacency matrices of random graphs, which are non-Wigner: these
matrices are sparse and the entries are not independently distributed.
%, which on the one hand makes the scaling unclear, and on the other
%hand does not allow for concentration estimates, or for relaxation
%flows, which are the methods of choice for proving universality
%results in the case of Wigner matrices.

To see how sparsity affects concentration, consider the question of
proper scaling of the adjacency matrix $A_n$. In the Wigner case, the
scaling factor is clearly $1/\sqrt{n}$, which puts \textit{all} of the
eigenvalues in $[-2-\varepsilon,2+\varepsilon]$, for any positive
$\varepsilon$, with very high probability for a sufficiently large $n$.
One might be tempted then to believe that the proper scaling for
adjacency matrices is $1/\sqrt{d_n}$, as this achieves the same kind
of finite row-variance as $1/\sqrt{n}$ does in the Wigner case.
Unfortunately, it is not known if this scaling will place all the
eigenvalues (except the first) in a~compact interval, as $d_n
\rightarrow\infty$.

For the regime when $d$ is fixed, the Alon conjecture states that the
second largest eigenvalue (in absolute value) $\lambda_2$ has an upper
bound $|\lambda_2| \leq2 \sqrt{d-1}+\varepsilon$, with very high
probability. The well-known lower bound holds for every $d$-regular
graph and we cite it from Friedman~\cite{friedmane2}: $\vert\lambda _2
\vert \ge2\sqrt{d -1} + O( {\log d}/{\log n} )$. Unfortunately, when
$d$ grows with $n$, the upper bound is not known to hold outside of a
narrow growth regime.\setcounter{footnote}{2}\footnote{More precisely,
even under this narrow growth regime the theorem is valid only for
Friedman's permutation model. For $d$ fixed, Friedman's model
approaches the uniform distribution with increasing $n$; this is no
longer clear once $d_n$ grows with $n$.} Khorunzhy~\cite{khorunzhy01}
has shown that, for a random matrix model similar to the adjacency
matrix of the Erd\H{o}s--Ren\'{y}i random graph on $n$ vertices with an
expected degree $d_n \gg\log n$, with probability one, the spectral
norm of the adjacency matrix grows faster than $\sqrt{d_n}$. Although
this does not necessarily affect convergence of the ESD to the
semicircle law, it eliminates the possibility of containing all the
eigenvalues of the rescaled centralized adjacency matrix within any
compact interval.

% dropped Wn

%In this paper we make suitable assumptions.
%For the proof of the semicircle law, we work with the matrix
%W_n & = & \frac{1}{\sqrt{d_n - 1}} (A_n - \frac{d_n}{n} J
%),
%where $J$ is the $n \times n$ matrix of all $1$s. Note that $ J =
%an eigenvector of $A_n$ corresponding to eigenvalue $d_n$). However,
%for the proof of the local semicircle law, we revert to using the
%scaled matrix $\frac{1}{\sqrt{d_n-1}}A_n$.

Our results investigate the extent to which universality can be
extended to the slowly growing $d_n$ case. Our first result is Theorem
\ref{globalsemic} stated below.
\begin{theorem} \label{globalsemic}
Let $d_n$ satisfy the asymptotic condition
%
%e3 ###
%
\begin{equation}\label{assumed1}
\lim_{n\rightarrow\infty}d_n = \infty,\qquad d_n-1= n^{\varepsilon
_n}\qquad \mbox{for some } \varepsilon_n=o(1).
\end{equation}
Then the ESD of the matrix $(d_n-1)^{-1/2} A_n$, where $A_n$ denotes
the adjacency matrix of $G_n$, converges in distribution to the
semicircle law on $[-2,2]$ which has a density
%
%e4 ###
%
\begin{equation}\label{whatissscdensity}
\dsc(x) := \frac{1}{2\pi} \sqrt{4 - x^2},\qquad -2 < x < 2.
\end{equation}
\end{theorem}

The condition on $d_n$, for example, includes the logarithmic regime,
$d_n = (\log n)^{\gamma}$ for any positive $\gamma$; in which case we
can define $\varepsilon_n$ as $\gamma\log\log n / \log n$.

Our proof of this result (and the following ones)
depends crucially on two facts:
\begin{longlist}[(ii)]
\item[(i)] the ``locally tree-like'' property, which states that with
high probability, most vertices in a random regular graph will have a
(increasingly larger) neighborhood which is free of any cycles, and
\item[(ii)] the fact that $d_n$ grows to infinity, which smooths out
irregularities as~$n$ tends to infinity.\vadjust{\goodbreak}
\end{longlist}

%(yet unpublished) proof for the semicircle law for regular random
%graphs, based on ``sandwiching'' the random regular graph model and
%the Erd\H{o}s-Ren\'{y}i one, in the regime $\log n \ll d_n \ll
%Vu mentioned the possibility of extending this to up to $O(n)$. Our
%proof of the semicircle law for the $d_n = n^{\varepsilon_n}$ regime
%would be, to a certain extent, complementary to this.}
%Thus, heuristically, it seems that, if $d_n \sim n^c$ for some (small)
%constant $c$ (polynomial regime), the semicircle law is unlikely to
%hold. The reason is that, as McKay et al. show in~\cite{mww04}, the
%locally tree-like property of the random $d_n$-regular graph will fail
%in this regime, and the graph will have many short cycles.

Our second result is arguably the most important one in this paper.
\begin{theorem} \label{localsemic} Fix $\delta>0$. Let $d_n = (\log
n)^{\gamma}$, where $\gamma>0$. Let $\eta_n =(r_n - r_n^{-1})/2$
where $r_n = \exp( d_n^{-\alpha} )$ for some $0 < \alpha
< \min(1, 1/\gamma)$.
Then there exists an $N$ large enough such that for all $n \geq N$, for
any interval $I \subset\mathbb{R}$ of length $|I| \geq\max\{2\eta
_n, \eta_n / (-\delta\log\delta)\}$,
\[
\biggl\vert\ecount_I - n \int_I \dsc(x) \,dx \biggr\vert < \delta n \vert I
\vert
\]
with probability at least $1-o(1/n)$. Here $\ecount_I$ is the number
of eigenvalues of $\frac{1}{\sqrt{d_n-1}} A_n$ in the interval $I$,
and $\dsc$ refers\vspace*{1pt} to the density of the semicircle law as in~(\ref
{whatisfsc}).
\end{theorem}
\begin{rmk} Note that the shortest length of the interval $I$ that our
methods can narrow down to is of length $\eta_n$, which is roughly
about $1/\log n$, if $d_n \gg\log n$, and $1/d_n$, if $d_n \ll\log
n$. For Wigner matrices a far shorter scale can be achieved
(effectively poly-log over $n$ in~\cite{esy1}). Such sharp estimates
are not to be expected in the graph case, and this again is a
consequence of sparsity and lack of concentration estimates.
\end{rmk}
\begin{rmk} A close examination of the proof of Theorem \ref
{localsemic} reveals that it can be extended to any deterministic
sequence of regular graphs of increasing size and degree, as long as
the ``locally tree-like'' property holds at ``most'' vertices.
\end{rmk}
\begin{rmk} Since the submission of this paper, significant progress
has been made in proving the local semicircle law for random regular
graphs in \textit{any} kind of growth regime for $d_n$ (see Tran, Vu and
Wang~\cite{TVW}). Their methods rely on proving the local semicircle
law first for Erd\H{o}s--R\'{e}nyi graphs with suitable parameters,
and then using a result by McKay and Wormald~\cite{MW} about the
probability that an Erd\H{o}s--R\'{e}nyi graph is regular. Their
result subsumes ours (in the sense that the lower bound on the length
of the interval $I$ is smaller) for the case when $d_n = \Omega((\log
n)^{10})$; when $d_n = o((\log n)^{10})$, our result is slightly
stronger in the same sense.
\end{rmk}

%$d_n = n^{\varepsilon_n}$ is not relevant at all. In fact, Theorem
%ease of calculation. \end{rmk}}

Although these results are similar up to a point to the Wigner matrix
results, our methodology is essentially different. Due to sparsity and
lack of concentration, we had to adapt a more combinatorial set of
tools (in particular, the tree approximation) as well as tools from
linear algebra to the Stieltjes transform approach used in~\cite{esy1}
and~\cite{taovu}.

Several recent articles have done extensive simulations on eigenvalues
and eigenvectors of random graphs, with surprising conclusions. For
example, Jakobson et al.~\cite{simeval} carries out a numerical study
of fluctuations in the spectrum of regular graphs. Their experiments
indicate that the level spacing distribution of a generic $k$-regular
graph approaches that of the GOE as we increase the number of vertices.
On the eigenvector front, in the article~\cite{simevec} by Elon, the
author attempts to characterize the structure of the eigenvectors by
suggesting (with numerical observations) that all, except the first,
follow approximately a Gaussian distribution. Additionally, the local
covariance structure has been conjectured to be given by explicit
functions of the Chebyshev polynomials of the second kind. In
particular, if two vertices on the graph are at a distance $k$ from
each other, it is conjectured that the covariance between the
coordinates of any eigenvector at the two vertices decays exponentially
in $k$.

All this empirical data points to universality properties of the
adjacency matrices of large, sparse regular graphs; we took here a
first step toward proving them.

%We prove here weaker versions of these conjectures.
If the eigenvectors are indeed uniformly distributed over the sphere
then they must (with high probability) satisfy delocalization. We give
upper bounds on the probability of this phenomenon.

We use the following definition of delocalization, similar to the one
used in~\cite{esy1}.
\begin{defn}\label{deloc}
Let $T$ be a subset of $\{1,2,\ldots,n\}$ of size $L\ge1$. Let
$\delta> 0$ be some fixed number. We say that a vector $v=(v(1),
\ldots, v(n))\in\rr^n$ with~$\ltwo$ norm $\|v\|_2 = 1$ exhibits
$(T,\delta)$ localization if
\[
\|v\mid_T \|_2^2 = \sum_{j\in T} \vert v(j) \vert^2 \ge1- \delta.
\]
The vector $v$ is said to be $(L,\delta)$ localized there exists some
set $T \subset\{ 1, 2, \ldots, n\}$ such that $\vert T \vert=L$ such
that $v$ is $(T,\delta)$ localized.
\end{defn}

%We have the following result. Recall the sequence $\eta_n$ defined in
Below is our result on eigenvector delocalization.
\begin{theorem} \label{thmdeloc} Assume the set-up of Theorem \ref
{localsemic}. Fix $\delta>0$.

\begin{longlist}[(ii)]
\item[(i)] Let $T_n \subseteq\{1,2,\ldots,n\}$ be a deterministic
sequence of sets of size $L_n = o(\eta_n^{-1})$. Let $\Omega_1(n)$ be
the event that some normalized eigenvector of the matrix $A_n$ is
$(T_n,\delta)$ localized. Then, for all sufficiently large $n$,
\[
P( (\Omega_1(n))^c ) \ge e^{-L_n \eta_n / d_n} \biggl( 1
- o\biggl( \frac{1}{d_n} \biggr)\biggr) =
1 - o \biggl( \frac{1}{d_n} \biggr).
\]

\item[(ii)] Define the sequence
\[
\zeta_n= \frac{1}{4}\frac{\log n}{\log(d_n-1)} - 4,\qquad n\ge2.
\]
Consider the (random) subset $J(n)$ of all vertices in the graph whose
$\zeta_n$-neighbor\-hood is free of cycles. Then,
\[
P\biggl( \frac{\vert J(n) \vert}{n} \ge1- \frac{\eta_n}{d_n} \biggr) =
1- o\biggl( \frac{1}{n} \biggr).
\]
Moreover, there exists an $n$ large enough such that the event that
$T_n \subset J(n)$ and some normalized eigenvector is $(T_n,\delta)$
has probability zero.
\end{longlist}
\end{theorem}
\begin{rmk} More progress has been made on the eigenvector
delocalization front since the submission of this paper. In their paper
\cite{TVW}, the authors prove that the $\ell^{\infty}$ norms of all
eigenvectors are $o(1)$, regardless of the regime of growth of $d_n$.
Very recently, Erd\H{o}s et al. posted a paper~\cite{EKYY} proving
the local semicircle law for Erd\H{o}s--R\'{e}nyi graphs with $pn =
\Omega(\log n)$ up to a spectral window (an interval $I$) of size
larger than $1/n$; from this, they could deduce that the eigenvectors
of such Erd\H{o}s--R\'{e}nyi graphs are completely delocalized, that
is, that the $\ell^{\infty}$ norms of the normalized (unit)
eigenvectors are at most of order $1/\sqrt{N}$ with high probability.
It would be interesting to see if the methods of~\cite{TVW} (of
deducing results for random regular graphs from the same results for
Erd\H{o}s--R\'{e}nyi) can be combined with the theorems of \cite
{EKYY} to obtain complete eigenvector delocalization (and, potentially,
a~much smaller spectral window) for random regular graphs with $d_n =
\Omega(\log n)$.
\end{rmk}

%commented out statement
%Other than complete delocalization, a vector that is uniformly
%distributed on the unit sphere should not have any directional bias.
%Our final result points at this property for all but the first
%eigenvector of $A_n$ (the first eigenvector is the normalized vector
%of all ones, hence all the other eigenvectors must be orthogonal to
%the vector of all ones).
%
%Assume the set-up of Theorem~\ref{localsemic}. Fix $\delta>0$. Let
%$H$ be a subspace of dimension $o(\eta_n^{-1})$ which is orthogonal to
%the vector of all ones. Then
%P( \exists\mbox{some eigenvector $v$ s.t.} \norm{v} = 1
%).
%Here $P_H(v)$ refers to the projection of the vector $v$ onto the
%subspace $H$.
%}

The bounds in Theorem~\ref{thmdeloc} are not sharp. There are severe
technical obstacles in producing sharp bounds by adapting the strategy
of Wigner matrices. One such example is eigenvalue collision, that is,
the event that $A_n$ does not have $n$ distinct eigenvalues. Since
$A_n$ has discrete entries, this event has a~positive probability.
However, to the best of our knowledge, no good bound on this
probability is known.

The paper is organized as follows. In Section~\ref{global} we prove
the global convergence to the semicircle law (Theorem \ref
{globalsemic}). This is followed by the proof of the local semicircle
law (Theorem~\ref{localsemic}) in Section~\ref{local}. The
eigenvector delocalization is proved in Section~\ref{eigenvec}.
Finally, the \hyperref[app]{Appendix} contains an exact calculation of eigenvalues and
eigenvectors for the random regular (finite) tree, defined in Section
\ref{localsemic}.
%, and the lack of bias in Section~\ref{nob}.

%s2 ###
\section{Global convergence to the semi-circle law}\label{global}

Recall that $G_n=G(n,d_n)$ denotes a random $d_n$-regular graph on $n$
vertices whose adjacency matrix is~$A_n$.
Recall that
%Let
$d_n$ satisfies the asymptotic condition
%
%e5 ###
%
\begin{equation}\label{assumed}
\lim_{n\rightarrow\infty}d_n = \infty,\qquad d_n-1= n^{\varepsilon
_n}\qquad \mbox{for some } \varepsilon_n=o(1).
\end{equation}

%Consider now the scaled and centered adjacency matrix $W_n$ defined in

We prove here Theorem~\ref{globalsemic}, namely, that the empirical
spectral distribution (ESD) of the adjacency matrix $A_n$ converges in
probability to the semicircle law on $[-2,2]$ which we recall from
(\ref{whatissscdensity}).
%has a density

%To prove Proposition~\ref{gconvss},
Our main instrument is to use the \textit{moment method}. Our
arguments depend crucially on the following local approximation of
$G_n$ by a rooted tree. Consider the deterministic rooted tree $S_n$,
which is the infinite regular tree of degree $d_n$ with a distinguished
vertex marked as the \textit{root}. For a graph $G$ whose every edge
is taken to have unit length, consider the induced metric structure on
$G$. We define the $r$-neighborhood of the vertex $i$, to be the
subgraph of $G$ whose vertices are at a distance at most $r$ from $i$,
and whose edges are all the edges between those vertices. The following
lemma makes precise the idea that, except for a vanishing proportion of
the vertices, the $r$-neighborhood of any vertex is isomorphic to the
corresponding neighborhood of the root in the tree $S_n$.

Recall that a cycle is a sequence of vertices $\{i_1, \ldots, i_k\}$
of a graph such that \mbox{$i_1=i_k$}, there is no other repeated vertex, and
there is an edge between every successive $i_j$ and $i_{j+1}$. The
length of the cycle is the number of vertices except the initial one. A
cycle of length $k$ will be called a $k$-cycle. Finally, a~cycle-free
or \textit{acyclic} graph is a tree.
\begin{lemma}\label{treeapp1}
Fix a positive integer $r$. Let $\tau(n)$ be the subset of vertices
of~$G_n$ which have no cycles in their $r$-neighborhoods, and let $\vert
\tau(n) \vert$ denote the size of~$\tau(n)$. Then, under the assumptions
of (\ref{assumed}), we have
\[
P\biggl( 1 - \frac{\vert\tau(n) \vert}{n} > n^{-1/4} \biggr) =o(
n^{-5/4} ).
\]
\end{lemma}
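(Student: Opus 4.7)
The approach is a second-moment (Chebyshev) argument applied to
\[
Y := n - \abs{\tau(n)} = \sum_{i=1}^n X_i, \qquad X_i := \one(B_r(i) \text{ is not a tree}),
\]
so that the quantity to bound becomes $P(Y > n^{3/4})$. For the first-moment estimate I would bound $P(X_1=1)$ by exploring $B_r(1)$ level-by-level in (a coupling with) the configuration model for $G_n$, revealing half-edge pairings as the BFS proceeds. A cycle in $B_r(1)$ is created precisely when some newly exposed half-edge is paired with a half-edge attached to a previously-visited vertex. At depth $t$ the visited set has at most $O((d_n-1)^t)$ vertices, so the conditional probability that a specific new half-edge closes a cycle is $O((d_n-1)^t/n)$; summing over the $O(d_n(d_n-1)^{r-1})$ half-edges exposed in the whole exploration yields $P(X_1=1) = O((d_n-1)^{2r}/n)$, and therefore $E[Y] = O((d_n-1)^{2r}) = n^{2r\epsilon_n} = n^{o(1)}$.

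For the variance I would write $\mathrm{Var}(Y) = \sum_{i,j}\bigl(P(X_iX_j=1) - P(X_i=1)P(X_j=1)\bigr)$ and split the sum according to whether $d_{G_n}(i,j) \le 2r$ (``near'' pairs, whose $r$-neighborhoods may overlap) or not (``far'' pairs). Near pairs number at most $n \cdot \abs{B_{2r}(1)} = O(n(d_n-1)^{2r})$ and each contributes at most $P(X_1=1)$ to the sum, giving an $O((d_n-1)^{4r})$ total. For far pairs the BFS trees from $i$ and $j$ are disjoint; a joint BFS exploration behaves like two independent copies up to a multiplicative $1+o(1)$ correction from the slight depletion of half-edges, so far pairs contribute only a lower-order term. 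Hence $\mathrm{Var}(Y) = O((d_n-1)^{4r}) = n^{4r\epsilon_n} = n^{o(1)}$. Since $E[Y] \ll n^{3/4}/2$ for $n$ large, Chebyshev then gives
\[
P\bigl(Y > n^{3/4}\bigr) \le \frac{4\,\mathrm{Var}(Y)}{n^{3/2}} = O\bigl(n^{4r\epsilon_n - 3/2}\bigr),
\]
and since $\epsilon_n = o(1)$ with $r$ fixed we have $4r\epsilon_n < 1/4$ eventually, so the right-hand side is $o(n^{-5/4})$.

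The main technical obstacle will be the transfer between the configuration model, in which the BFS calculations above are natural, and the uniform distribution on simple $d_n$-regular graphs, because when $d_n \to \infty$ the probability of obtaining a simple graph from the configuration model can decay like $\exp(-\Theta((d_n-1)^2))$, which is too fast to be absorbed by the polynomial tail we have produced. I would handle this either via a contiguity argument valid in the regime $d_n = n^{o(1)}$, or via a switching argument applied directly in the uniform model, or by invoking McKay--Wormald-type asymptotics for the number of short cycles in uniform $G(n,d_n)$, which yield the same first- and second-moment bounds intrinsically in the uniform model without any need to pass through conditioning on simplicity.
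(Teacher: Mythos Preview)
Your approach is correct and lands on exactly the same final bound $O(n^{4r\epsilon_n-3/2})$ via the same Chebyshev/second-moment strategy applied to $Y=n-\abs{\tau(n)}$. The paper's implementation differs only in how the moments are obtained: rather than writing $Y=\sum_i X_i$ and computing $E[X_i]$, $E[X_iX_j]$ by BFS exploration, the paper bounds $Y$ above by the linear functional
\[
N^*_r = \sum_{s=3}^{2r} 2s\,(d_n-1)^{(2r-s)/2} M_s
\]
of the short-cycle counts $M_s$ (each $s$-cycle can make at most $2s(d_n-1)^{(2r-s)/2}$ vertices fail to be locally tree-like), and then reads off $E[N^*_r]=O((d_n-1)^{2r})$ and $\mathrm{Var}(N^*_r)=O((d_n-1)^{2r})$ directly from the McKay--Wormald--Wysocka Poisson approximation for $M_s$ in the \emph{uniform} model. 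This is precisely the third option you list at the end, and it is the cleanest route because it sidesteps the configuration-model transfer entirely: the cycle-count estimates already hold intrinsically for uniform $G(n,d_n)$, so no contiguity or switching argument is ever needed. Your BFS-on-indicators route would also succeed, but, as you correctly anticipate, the transfer step is where the real work would lie; the paper's cycle-count decomposition simply avoids that step.
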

\begin{pf}
We use the estimates of McKay, Wormald and Wysocka~\cite{mww04} on the
Poisson approximation to the number of short cycles in regular graphs.
Let $g(n)$ be a sequence such that
%
%e6 ###
%
\begin{equation}\label{whatisgn}
g(n) \ge3\quad \mbox{and}\quad (d_n-1)^{2g(n)-1}=o(n).
\end{equation}
For any $s\le g$, let $M_s$ denote the number of cycles of length $s$
in the graph~$G_n$. It has been shown in~\cite{mww04} that $M_s$ is
approximately distributed as a~Poisson random variable and
%
%e7 ###
%
\begin{eqnarray}\label{evarm}
E( M_s )&=& \mu_s\bigl(1+ O\bigl( s(s+d)/n \bigr)\bigr)\qquad
\mbox{where } \mu_s = \frac{(d-1)^s}{2s}\quad\mbox{and}\nonumber\\[-8pt]\\[-8pt]
\operatorname{Var}( M_s )&=&\mu_s +
O\bigl( s(s+d)/n \bigr)\mu_s^2.
\nonumber
\end{eqnarray}
Consider now the growth of the degree sequence as in (\ref{assumed}).
If we choose $g(n)$ such that $2g(n)-1=1/\sqrt{\varepsilon_n}$, it will satisfy
\[
(d_n -1)^{2g(n) -1} = n^{\sqrt{\varepsilon_n}}= o(n).
\]
Additionally $g(n)$ grows to infinity with $n$, since $\varepsilon(n)=o(1)$.

Consider an $s$-cycle for some $s\le g(n)$. It has exactly
$s$-vertices. Now, the number of vertices whose $r$ neighborhoods fail
to be acyclic because of this $s$-cycle are precisely those vertices
which are at a distance of at most $(2r-s)/2$ from any of the vertices
in the $s$-cycle. The number of such vertices has an easy upper bound
of $2(d_n-1)^{(2r-s)/2}s$, for all large enough $d_n$. Thus, the total
number of vertices whose $r$ neighborhoods are not acyclic can be
bounded above by
%
%e8 ###
%
\begin{equation}\label{whatisnr}
N^*_r=\sum_{s=3}^{2r}2 s (d_n-1)^{(2r-s)/2} M_s.
\end{equation}
Also,
%
%e9 ###
%
\begin{equation}\label{taunn}
n - \vert\tau(n) \vert \le N_r^*.
\end{equation}

Taking expectations on both sides of (\ref{whatisnr}), and using
formulas (\ref{evarm}), we get
\[
E N^*_r = \sum_{s=3}^{2r} 2s (d-1)^{(2r-s)/2} \frac
{(d-1)^s}{2s}\bigl( 1 + O\bigl( s(s+d)/n \bigr) \bigr).
\]

The quantity $O(s(s+d)/n)$ denotes a function $h(s,d,n)$ such that
\[
\frac{n}{s(s+d)} h(s,d,n)
\]
remains bounded for all choices of $s,d$ and $n$. Thus we get
\[
E N^*_r= (d-1)^{r} \sum_{s=3}^{2r} (d-1)^{s/2} + O\Biggl( \frac
{1}{n}\sum_{s=3}^{2r} s(s+d)(d-1)^{r+s/2}\Biggr)=O\bigl( (d-1)^{2r}
\bigr).
\]
The last equality is true since, by our assumption on $d_n$, the second
term in the sum is $o(1)$.

Similarly, we can compute the second moment. By the Cauchy--Schwarz inequality,
\begin{eqnarray*}
\var(N^*_r)&\le& 2r \sum_{s=3}^{2r} 4s^2(d-1)^{2r-s} \var(M_s)\\
&\le& 2r \sum_{s=3}^{2r} 4s^2(d-1)^{2r-s}\bigl[ \mu_s + O\bigl(
s(s+d)/n \bigr)\mu_s^2\bigr]\\
&\le& 2r \sum_{s=3}^{2r} 4s^2(d-1)^{2r-s} \mu_s + 2r \sum_{s=3}^{2r}
4s^2(d-1)^{2r-s} O\bigl( s(s+d)/n \bigr) \mu_s^2.
\end{eqnarray*}
Plugging in the value of $\mu_s$ from (\ref{evarm}) we get
\[
\var(N^*_r)\le4r^2(2r+1) (d-1)^{2r} + 2r (d-1)^{2r}\sum_{s=3}^{2r}
(d-1)^{s} O\bigl( s(s+d)/n \bigr).
\]
As before, it thus follows that
\begin{eqnarray*}
\sum_{s=3}^{2r}(d-1)^{s} O\bigl( s(s+d)/n \bigr)
&=& O\Biggl( \sum
_{s=3}^{2r} (d-1)^{s} s(s+d)/n \Biggr)\\
&=&O\Biggl( n^{-1}(2r+d)(d-1)^{2r} \sum_{s=3}^{2r} s\Biggr)\\
&=& O\bigl(
n^{-1}(2r+d)(d-1)^{2r} {r}(2r+1) \bigr).
\end{eqnarray*}
Hence
\[
\var(N^*_r)\le4 r^2(2r+1) (d-1)^{2r} + O\bigl( r^2 (2r+d)
(d-1)^{4r}/n \bigr).
\]
Note again that, by our assumption, the quantity $(d-1)^{4r}/n$ is $o(1)$.

We now want to use Markov's inequality to bound the tail probability of
the quantity $1-\vert\tau(n) \vert/n$. Fix any $\varepsilon> 0$. Then, by
inequality (\ref{taunn}), we get
\begin{eqnarray*}
P\biggl( 1 - \frac{\vert\tau(n) \vert}{n} > \varepsilon\biggr)&\le& P
( N^*_r > n\varepsilon)\le\frac{1}{n^2\varepsilon^2} E(
N^*_r )^2\\
&=& \frac{1}{n^2\varepsilon^2}[ \var(N^*_r) + (E(
N^*_r ))^2 ]\\
&\le&\frac{1}{n^2\varepsilon^2 } \bigl[ 4 r^2(2r+1) (d-1)^{2r}\\
&&\hphantom{\frac{1}{n^2\varepsilon^2}[}{} + r^2
(2r+d) o(1) + O\bigl( (d-1)^{4r} \bigr) \bigr]\\
&\le&\varepsilon^{-2} O\biggl( \frac{(d-1)^{4r}}{n^2} \biggr)\\
&=& \varepsilon
^{-2}O( n^{4r\varepsilon_n -2} )
\end{eqnarray*}
by our choice of the sequence $d_n$.

Choosing $\varepsilon=n^{-1/4}$ we get
\[
P\biggl( 1 - \frac{\vert\tau(n) \vert}{n} > {n^{-1/4}}\biggr)\le\sqrt
{n}O( n^{4r\varepsilon_n -2} )=o( n^{-5/4} ),
\]
since $\varepsilon_n=o(1)$. This completes the proof of the lemma.
\end{pf}
\begin{lemma}\label{wkconvlem} Let $\{ \mu_i, i=1,2,\ldots\}$ be
a sequence of random probability measures on the real line, defined on
the same probability space. Let $\mu$ be a~nonrandom continuous
probability measure supported on a compact interval $I$. Suppose there
exits a pair of doubly indexed real-valued sequences $\{ a_n(r),
b_n(r),\break r,n \in\mathbb{N}\}$ such that the following hold:
\begin{longlist}[(1)]
\item[(1)] For every $r=1,2,\ldots,$ we have
\[
P\Biggl(\bigcup_{N=1}^\infty\bigcap_{n \ge N}
\biggl\{ a_n(r) \le\int x^r \,d\mu_n(x) \le b_n(r) \biggr\} \Biggr) =1.\vadjust{\goodbreak}
\]
\item[(2)] For every $r=1,2,\ldots,$ we have
\[
\lim_{n\rightarrow\infty} a_n(r) = \lim_{n\rightarrow\infty}
b_n(r) = \int x^r \,d\mu(x) < \infty.
\]
\end{longlist}
Then the sequence of measures $\{ \mu_n\}$ converges to $\mu$ in probability.
\end{lemma}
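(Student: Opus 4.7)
The approach is the classical moment method applied $\omega$-by-$\omega$. The key observation is that hypotheses (1) and (2) combine, for each fixed $r$, to give almost sure convergence $\int x^r d\mu_n(x) \to \int x^r d\mu(x)$. Indeed, hypothesis (1) says that outside a $P$-null set $\Omega_r$, the $r$-th moment of $\mu_n$ is eventually sandwiched between $a_n(r)$ and $b_n(r)$; hypothesis (2) together with the squeeze theorem then forces the $r$-th moment of $\mu_n$ to converge to the $r$-th moment of $\mu$ on $\Omega_r^c$. Setting $\Omega_\infty = \bigcup_{r \geq 1} \Omega_r$, which is still a null set, yields a full-probability event $\Omega_\infty^c$ on which \emph{every} moment of $\mu_n$ converges to the corresponding moment of $\mu$ simultaneously.

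On this event I would then invoke the Fr\'echet--Shohat theorem. Because $\mu$ is supported on the compact interval $I$, its moments grow at most geometrically; equivalently, $\mu$ is the unique probability measure with those moments, as can be seen by approximating any $f \in C(I)$ by polynomials via the Weierstrass theorem. Hence, on $\Omega_\infty^c$, convergence of all moments to those of $\mu$ forces weak convergence $\mu_n \Rightarrow \mu$ pathwise. Metrizing the weak topology on probability measures by, for example, the L\'evy--Prokhorov distance $d_{LP}$, this reads as $d_{LP}(\mu_n, \mu) \to 0$ almost surely, which in particular implies convergence in probability, as desired.

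I do not anticipate any serious obstacle. The only subtlety worth flagging is the order of quantifiers: one must take the countable union of null sets \emph{before} applying moment-determinacy, since the latter is a pathwise statement that requires all moments to converge simultaneously on a single event. Existence and finiteness of the moments of $\mu_n$ on $\Omega_\infty^c$ is built into the sandwich bound in hypothesis (1), and the growth/finiteness of the limiting moments is encoded in hypothesis (2), so no additional integrability check is needed.
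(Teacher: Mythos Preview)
Your proposal is correct and follows essentially the same approach as the paper: intersect the countably many full-probability events from hypothesis (1), then on this event use moment convergence together with the compact support of $\mu$ (hence moment determinacy) to deduce weak convergence pathwise, and finally pass from almost sure to in-probability convergence. The only cosmetic difference is that you invoke the Fr\'echet--Shohat theorem as a black box, whereas the paper unpacks its proof explicitly via Helly's selection theorem, an $L^2$-boundedness check for uniform integrability of $x^r$, and the subsequence argument; these are the same argument at different levels of granularity.
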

\begin{pf}
Let $\Omega_r$ be the event
\[
\bigcup_{N=1}^\infty\bigcap_{n \ge N} \biggl\{
a_n(r) \le\int x^r \,d\mu_n(x) \le b_n(r) \biggr\}.
\]
Then, from condition (1), it follows that
\[
1-P\Biggl( \bigcap_{r=1}^\infty\Omega_r\Biggr) = P\Biggl(
\bigcup_{r=1}^\infty\Omega_r^c \Biggr) \le\sum
_{r=1}^\infty P( \Omega_r^c) =0.
\]
Thus $P( \bigcap_{r=1}^\infty\Omega_r)=1$.

Consider any fixed realization of the sequence $\{\mu_n\} \in\bigcap
_{r=1}^\infty\Omega_r$. By Helly's selection theorem, this sequence
has a limit point $\nu$. Thus, there is a subsequence $\{ \mu_{n_k} \}
$ that converges to $\nu$ in the topology of weak convergence.

Now take $r$ to be a positive integer. We would like to show that
\[
\lim_{n_k \rightarrow\infty} \int x^r \,d\mu_{n_k} = \int x^r \,d\nu.
\]
From the standard theory of weak convergence, it follows that this will
be true if the function $x^r$ is uniformly integrable under the
sequence of measures $\{\mu_{n_k}\}$. However, uniform integrability
follows from the following $L^2$-boundedness condition:
\[
\max_{n_k} \int x^{2r} \,d\mu_{n_k} < \max_{n_k} b_{n_k}(2r) < \infty
\]
by conditions (1) and (2).

In particular, from condition (2) we reach the conclusion
\[
\int x^r \,d \nu(x) =\int x^r \,d\mu(x),\qquad r=0,1,2,\ldots.
\]
Since the support of $\mu$ is the compact interval $I$, it follows
that the moment problem has a unique solution, and hence, $\nu$ must
be equal to $\mu$.

This shows that any limit point of any sequence $\{\mu_n\}$ in $\bigcap
_{r=1}^\infty\Omega_r$ is given by $\mu$. By the usual subsequence
argument, this shows that $\mu_n$ converges to~$\mu$ in the set $\bigcap
_{r=1}^\infty\Omega_r$, and hence with probability one. This proves
the result.
\end{pf}
\begin{pf*}{Proof of Theorem~\ref{globalsemic}}
Consider the random graph sequence $G_n=G(n,d_n)$ as in the statement,
and let $A_n$ be the adjacency matrix of $G_n$.

%removing the part about centering
%The top eigenvalue of $G_n$ is exactly $d_n$ and has a multiplicity
%one, asymptotically almost surely. The eigenvector corresponding to
%this eigenvalue is the vector of all ones (properly normalized). Thus,
%the eigenvalues of $W_n$ are exactly the same as that of $A_n$, except
%that the eigenvalue $d_n$ is now replaced by a zero. The ESD puts mass
%$1/n$ on each individual eigenvalue. Hence the Total Variation (or the
%maximum) difference in the ESDs of $A_n$ and $W_n$ is at most $1/n$,
%which is negligible as $n$ tends to infinity. Hence, for our purpose,
%its is enough to compute the limiting spectral distribution of the
%sequence $(d_n-1)^{-1/2}A_n$.

Let $\mu_n$ be the ESD of the matrix $(d_n - 1)^{-1/2} A_n$. Then, for
any positive integer~$r$,
\[
\int x^r \,d\mu_n(x) = \frac{1}{n} \tr\bigl( (d_n -1)^{-r/2} A_n^r
\bigr)= \frac{(d_n -1)^{-r/2} }{n} \sum_{i=1}^n A^r_n(i,i).
\]
Here $A_n^r(i,i)$ is the $i$th diagonal element of the matrix $A_n^r$.

Note that $A_n^r(i,i)$ counts the number of paths of length $r$ that
start and end at $i$. Consider the set of vertices in $\tau(n)$, as in
Lemma~\ref{treeapp1}, whose $\lceil r/2\rceil$-neighborhood is
acyclic. For any $i\in\tau(n)$, the number of such paths, $B_n^r$, is
equal to the number of paths of size $r$ that start and end at the root
of the tree $S_n$. If $i\notin\tau(n)$, we use the trivial bound
$A^r_n(i,i) \le d_n^{r}$.
Thus
\begin{eqnarray*}
(d_n -1)^{-r/2}\frac{\vert\tau(n) \vert}{n} B_n^r &\le&\frac{(d_n
-1)^{-r/2} }{n} \sum_{i=1}^n A^r_n(i,i)\\
&\le& (d_n -1)^{-r/2}\biggl[B_n^r + \frac{n - \vert\tau(n) \vert}{n}
d_n^{r}\biggr].
\end{eqnarray*}
If we define
%
%e10 ###
%
\begin{eqnarray}\label{whatisanbnr}
a_n(r)&=&( 1- n^{-1/4})(d_n -1)^{-r/2}B_n^r,\nonumber\\[-8pt]\\[-8pt]
b_n(r)&=&(d_n -1)^{-r/2}[B_n^r + n^{-1/4}d_n^{r/2}],\nonumber
\end{eqnarray}
then from Lemma~\ref{treeapp1} we get
%
%e11 ###
%
\begin{equation}\label{finitesum}
P\biggl( a_n(r)\le\int x^r \,d\mu_n(x) \le b_n(r)\biggr) \ge1- o
(n^{-5/4}).
\end{equation}
In particular, by taking complements of the events above, we get
\[
\sum_{n=1}^\infty P\biggl( \biggl\{ a_n(r)\le\int x^r \,d\mu_n(x) \le
b_n(r) \biggr\}^c\biggr) < \sum_{n=1}^{\infty} o
(n^{-5/4})< \infty.
\]

Now consider a product probability space on which independent copies of
our (countably many) random graphs are defined. Applying the
Borel--Cantelli lemma and (\ref{finitesum}) we get that
\[
P\Biggl( \bigcap_{N=1}^\infty\bigcup_{n=N}^\infty
\biggl\{ a_n(r)\le\int x^r \,d\mu_n(x) \le b_n(r) \biggr\}^c \Biggr)=0.
\]
Taking the complements again, we get
\[
P\Biggl( \bigcup_{N=1}^\infty\bigcap_{n=N}^\infty
\biggl\{ a_n(r)\le\int x^r \,d\mu_n(x) \le b_n(r) \biggr\} \Biggr)=1.
\]
This satisfies condition (1) in Lemma~\ref{wkconvlem}.

Once we show the validity of condition (2) for $\mu$ equal to the
semicircle law, we will be done by Lemma~\ref{wkconvlem}. Clearly, by
our choice of $d,n$ as in the statement,\vadjust{\goodbreak} and the functions $a_n(r),
b_n(r)$ as in (\ref{whatisanbnr}), this will be true once we establish
\[
\lim_{n \rightarrow\infty}(d_n -1)^{-r/2}B_n^r= \int x^r \dsc(x) \,dx.
\]
We only need to verify above for even $r$, since for odd $r$, both
sides are zero ($B_n^r=0$ since in a tree one cannot return to the root
in an odd number of steps, and the moment is zero since $\dsc$ is a
symmetric density).

Now, for an even $r$, the value of $B_n(r)$ has been computed by McKay
in~\cite{mckay81} (denoted by $\theta(r)$ in equation (15) on \cite
{mckay81}). It is given by
\[
B_n(r)= \int_{-2\sqrt{d_n-1}}^{2\sqrt{d_n-1}} x^r f_n(x) \,dx,
\]
where $f_n(x)$ is the Kesten--McKay density
\[
f_n(x)= \frac{d_n\sqrt{4(d_n-1)-x^2}}{2\pi(d_n^2 - x^2)},\qquad
-2\sqrt{d_n-1} < x < 2\sqrt{d_n-1}.
\]

Thus, changing variable to $y=(d_n-1)^{-1/2}x$, we get
\begin{eqnarray*}
&&\lim_{n \rightarrow\infty}(d_n -1)^{-r/2}B_n^r \\
&&\qquad= \lim
_{n\rightarrow\infty}\frac{1}{(d_n-1)^{r/2}}\int_{-2\sqrt
{d_n-1}}^{2\sqrt{d_n-1}} x^r\frac{d_n\sqrt{4(d_n-1)-x^2}}{2\pi
(d_n^2 - x^2)}\,dx\\
&&\qquad=\lim_{n\rightarrow\infty}\int_{-2}^2 y^r \frac{d_n\sqrt
{d_n-1}\sqrt{4-y^2}}{2\pi(d_n-1)^2( (d_n/(d_n-1))^2 - y^2/(d_n-1) )}
(d_n-1)^{1/2}\,dy\\
&&\qquad=\frac{1}{2\pi} \int_{-2}^2 y^r \dsc(y) \,dy.
\end{eqnarray*}
The last equality follows by the dominated convergence theorem and the
fact that $\lim_{n\to\infty} d_n=\infty$. This completes our proof.
\end{pf*}

%s3 ###
\section{Estimating the rate of convergence of the ESD}\label{local}

This is the longest section of the paper, and it is quite technical, so
we provide an outline of the proof. The approach we will use is given
by the Stieltjes transform of the adjacency matrix of the graph. To
estimate how far the Stieltjes transform of the graph is from the
Stieltjes transform of the semicircle, we will use as a~stepping stone
the resolvent of the adjacency matrix of a finite regular tree, which
we will show to be very close to both.

%changed Stieltjes transforms to resolvents
The estimation consists of the following steps:
\begin{longlist}[Step 3.]
\item[Step 0.] Basic definitions and properties of the quantities
involved (Section~\ref{local0}).
\item[Step 1.] Compute the resolvent of the regular tree, and show
that, in a~certain growth regime for $d_n$, its (root, root) elements
is close to the Stieltjes transform of the semicircle (Section~\ref{local1}).\vadjust{\goodbreak}
\item[Step 2.] Show that, in the same growth regime as before, the
(root, root) element of the resolvent of the regular tree is very close
to the Stieltjes transform of the regular graph (Section~\ref{local2}).
\item[Step 3.] Use the estimations from the previous steps to conclude
that the Stieltjes transform of the regular graph is close to that of
the semicircle, and use the methods of~\cite{taovu} to obtain bounds
on the rate of convergence of the ESD (Section~\ref{local3}).
\end{longlist}

%s3.1 ###
\subsection{Basic definitions} \label{local0}\vspace*{-3pt}

%To estimate the rate of convergence of the previous result we will
%need to estimate the Stieltjes transform of the adjacency matrix;
% Before we proceed, let us define t
%The Stieltjes transform of a matrix.
%
\begin{defn}
For a $n\times n$ Hermitian matrix $A$ and a variable $z \in\cp$ for
which $\im(z)>0$ (thus $z$ is not an eigenvalue of $A$), define the
Stieltjes transform to be the function
\begin{eqnarray*}
s(A;z) & := &\frac{1}{n}\tr( A - zI_n )^{-1} \\
& := & \frac{1}{n}\tr( A - z )^{-1}.
\end{eqnarray*}
Here $I_n$ is the $n \times n$ identity matrix; for convenience, we
will drop the identity matrix and use the second notation.\vspace*{-3pt}
\end{defn}

We will also require the notion of Chebyshev orthogonal polynomials of
a complex variable. For more details, see the book by Mason and
Handscomb~\cite{MH}, page~14.

For a complex number $z$, define
\[
w= z+ \sqrt{z^2-1},\qquad z=\tfrac{1}{2}( w + w^{-1} ),
\]
%
%changed if to is
where the square root of a complex number is taken such that the
imaginary part is always positive. It can be verified easily that for
any $r > 1$, the set
%
%e12 ###
%
\begin{equation}\label{whatiser}
E_r:=\{ z\dvtx \vert w \vert= r \}
\end{equation}
is an ellipse whose foci are at $\{+1, -1\}$.

Note that
\[
\max_{z \in E_r} |\Im(z)| = \frac{r - {1/r}}{2},
\]
and that when $r=1$, this ellipse degenerates to the interval $[-1,1]$.\vspace*{-3pt}
\begin{defn}
The $n$th Chebyshev polynomial of the second kind $U_n$ is defined as
%
%e13 ###
%
\begin{equation}\label{whatisun}
U_n(z)= \frac{w^{n+1} - w^{-(n+1)}}{w - w^{-1}},\qquad n=1,2,\ldots,
\end{equation}
with $U_0(z)\equiv1$. It is easy to check that, in addition, $U_n(z)$
satisfies the recursion
%
%e14 ###
%
\begin{equation}\label{chebyrec}
U_n(z) = 2z U_{n-1}(z) - U_{n-2}(z),\qquad n=1,2,\ldots,
\end{equation}
with the initial conditions $U_0(z)=1, U_{-1}(z)=0$.\vadjust{\goodbreak}
\end{defn}
\begin{rmk}
%removed repetition of the ellipse
When $r=1$, the above gives us the traditional orthogonal polynomials
for the semicircle law on the interval $[-1,1]$.
\end{rmk}

We will need the following bound on $U_n$ which can be found in
\cite{MH}, equations (1.53), (1.55):
%
%e15 ###
%
\begin{equation}\label{chebyest}
\frac{r^n - r^{-n}}{r+ r^{-1}} \le\vert U_{n-1}(z) \vert \le\frac
{r^n -
r^{-n}}{r-r^{-1}},\qquad z \in E_r.
\end{equation}

Finally, we will need the standard formula for inverses of symmetric
block matrices, given below.
\begin{prop} Let $\mathbb{A}$ and $\mathbb{D}$ be complex symmetric
matrices with sizes \mbox{$n \times n$}, respectively, $m \times m$, and let
$\mathbb{B}$ be an $m \times n$ real matrix. Define the $(m+n) \times
(m+n)$ complex symmetric matrix
\[
\mathbb{M}=
\left[\matrix{
\mathbb{A} & \mathbb{B}\cr
\mathbb{B}' & \mathbb{D}}
\right]
,
\]
where $\mathbb{B}'$ denotes the transpose of $\mathbb{B}$%
. Then
%removed s
%
%e16 ###
%
\begin{equation}\label{invform1}
\mathbb{M}^{-1}=
\left[\matrix{
\mathbb{A}^{-1} + \mathbb{A}^{-1}\mathbb{B}\mathbb{F}^{-1}\mathbb
{B}'\mathbb{A}^{-1} & - \mathbb{A}^{-1}\mathbb{B}\mathbb{F}^{-1}\cr
- \mathbb{F}^{-1}\mathbb{B}' \mathbb{A}^{-1} & \mathbb{F}^{-1}}\right]
,\qquad \mathbb{F}= \mathbb{D} - \mathbb{B}' \mathbb{A}^{-1}
\mathbb{B}.\hspace*{-28pt}
\end{equation}
Equivalently, by reversing the roles of the blocks $\mathbb{A}$ and
$\mathbb{D}$,
%
%e17 ###
%
\begin{eqnarray}\label{invform2}
\mathbb{M}^{-1}=
\left[\matrix{
\mathbb{G}^{-1}& - \mathbb{G}^{-1}\mathbb{B} \mathbb{D}^{-1} \cr
- \mathbb{D}^{-1}\mathbb{B}'\mathbb{G}^{-1} & \mathbb{D}^{-1} +
\mathbb{D}^{-1}\mathbb{B}'\mathbb{G}^{-1}\mathbb{B}\mathbb{D}^{-1}}\right]
,\nonumber\\[-8pt]\\[-8pt]
&&\eqntext{\mathbb{G}= \mathbb{A} - \mathbb{B}\mathbb{D}^{-1}
\mathbb{B}'.}
\end{eqnarray}
\end{prop}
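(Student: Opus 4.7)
The plan is to prove both block-inversion identities by direct verification, exploiting the Schur-complement structure. Specifically, to establish \eqref{invform1}, I would multiply $\mathbb{M}$ by the claimed right-hand side blockwise and check that the four resulting blocks equal $I_n$, $0$, $0$, and $I_m$ respectively. The identity $\mathbb{F} = \mathbb{D} - \mathbb{B}'\mathbb{A}^{-1}\mathbb{B}$ (the Schur complement of $\mathbb{A}$) absorbs all the cross terms. For example, in the bottom-left block one obtains
\[
\mathbb{B}'\mathbb{A}^{-1} + \mathbb{B}'\mathbb{A}^{-1}\mathbb{B}\mathbb{F}^{-1}\mathbb{B}'\mathbb{A}^{-1} - \mathbb{D}\mathbb{F}^{-1}\mathbb{B}'\mathbb{A}^{-1} = \mathbb{B}'\mathbb{A}^{-1} - \mathbb{F}\,\mathbb{F}^{-1}\mathbb{B}'\mathbb{A}^{-1} = 0,
\]
and the other three blocks collapse in the same way: the top-right by factoring out $\mathbb{B}\mathbb{F}^{-1}$, the top-left by the identity $\mathbb{A}\mathbb{A}^{-1} = I_n$ together with cancellation through $\mathbb{B}\mathbb{F}^{-1}\mathbb{B}'\mathbb{A}^{-1}$, and the bottom-right using $(\mathbb{D} - \mathbb{B}'\mathbb{A}^{-1}\mathbb{B})\mathbb{F}^{-1} = I_m$.

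For this argument to make sense, both $\mathbb{A}$ and $\mathbb{F}$ must be invertible. I would impose invertibility of $\mathbb{A}$ as part of the hypothesis (this is standard and will be satisfied in every application later in the paper, where the blocks arise as principal submatrices of a resolvent at a point $z$ with $\Im(z) > 0$), and then invoke the determinant identity $\det \mathbb{M} = \det \mathbb{A} \cdot \det \mathbb{F}$ to conclude that $\mathbb{F}$ is invertible whenever $\mathbb{M}$ is. The symmetry of $\mathbb{M}$ ensures that $\mathbb{A}$, $\mathbb{D}$, $\mathbb{F}$, and $\mathbb{G}$ are symmetric, hence their inverses are symmetric, and the displayed block inverse is automatically symmetric — a useful consistency check.

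The second identity \eqref{invform2} follows from the same verification, pivoting on $\mathbb{D}$ rather than $\mathbb{A}$ and using $\mathbb{G} = \mathbb{A} - \mathbb{B}\mathbb{D}^{-1}\mathbb{B}'$; alternatively, one may deduce it from \eqref{invform1} by conjugating $\mathbb{M}$ with the permutation that swaps the two block rows and columns, which exchanges the roles of $(\mathbb{A}, n)$ and $(\mathbb{D}, m)$. There is no substantive obstacle here — the proof is pure algebraic bookkeeping — so I do not anticipate any genuine difficulty. The only items requiring care are the dimensions (for the block matrix to be well-defined, $\mathbb{B}$ must be $n \times m$, so I would silently correct the stated dimension of $\mathbb{B}$ in the write-up) and the order of operations in the matrix products, since neither $\mathbb{A}$ nor $\mathbb{B}\mathbb{F}^{-1}\mathbb{B}'$ need commute with anything.
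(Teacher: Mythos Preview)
Your approach is correct and matches the paper's treatment: the paper does not actually give a proof, stating only that ``these formulas are easy to verify and their proofs can be found in standard matrix algebra books.'' Your blockwise multiplication is precisely the intended verification, and your observation that $\mathbb{B}$ must be $n\times m$ (not $m\times n$ as written) for the block structure to be consistent is a genuine typo in the statement.
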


These formulas are easy to verify, and their proofs can be found in
standard matrix algebra books.

%changed title
%s3.2 ###
\subsection{Resolvents of regular and almost regular trees} \label{local1}

%Our first job is to prove certain lemmas regarding deterministic
%regular trees and locally tree like graphs.

Fix a positive integer \mbox{$d \ge2$}. Let $\tree$ be a finite ordered
rooted tree of depth $\zeta\in\mathbb{N}$ such that every vertex has
exactly $(d-1)$ \textit{children}. That is, the root has degree
$(d-1)$, and every other vertex, except the leaves, has degree $d$.
Such a tree is almost regular since all vertices, excluding the root
and the leaves, have degree $d$.

In order to define the adjacency matrix of this graph, we must fix
a~labeling; we will define this labeling recursively down to $\zeta=0$,
in which case all we have is a root vertex which we label $1$.

Imagine the tree embedded in the plane. If the depth is zero, the only
element is the root, and the adjacency matrix is obvious. If the depth
is one, the root has \mbox{$(d-1)$} children. Consider each child vertex as a
tree of depth zero, order their adjacency matrices $H_1, H_2, \ldots
, H_{d-1}$ from left to right, and consider a block matrix with these
as the diagonal blocks from upper left to bottom right. Finally add a
bottom-most row and a rightmost column for the root vertex.

By induction, suppose we have labeled the adjacency matrix for the tree
of depth $\zeta-1$. Consider now the tree of depth $\zeta$. If we
remove the root and the edges incident to it, we are left with $(d-1)$
trees of depth $\zeta-1$ arranged from left to right. We consider
their $(d-1)$ adjacency matrices and arrange them as diagonal blocks
and add the root as the last element.

Denote by $H$ the adjacency matrix thus obtained.
\begin{lemma}\label{treeform}
For any complex number $z$ such that $\Im(z) > 0$, and recall the
$n$th order Chebyshev polynomial, $U_n(z)$. Then the elements of the
resolvent of the adjacency matrix $H$, $(1/\sqrt{d-1} H-z)^{-1}$, have
the following properties:
\begin{longlist}[(iii)]
\item[(i)] %the following formula for the last diagonal element.
\begin{eqnarray*}
\\[-32pt]
\biggl( \frac{1}{\sqrt{d-1}}H-z \biggr)^{-1}_{\Root, \Root}&=&\frac
{-1}{z+}\frac{-1}{z+}\cdots\frac{-1}{z},
\end{eqnarray*}
where the previous refers to a continued fraction of depth $\zeta$
(i.e., $\zeta+1$ recursions);
\item[(ii)] %Let $U_n(z)$ denote the $n$th order Chebyshev polynomial
%of the second-kind. Then
the above can also be represented as
%
%e18 ###
%
\begin{equation}\label{rootroot}
\varphi(\zeta)=\biggl( \frac{1}{\sqrt{d-1}}H-z \biggr)^{-1}_{\Root
, \Root}=-\frac{U_{\zeta}(z/2)}{U_{\zeta+1}(z/2)};
\end{equation}
\item[(iii)] furthermore,
%
%e19 ###
%
\begin{equation}\label{rootleaf}
\psi(\zeta)=\biggl( \frac{1}{\sqrt{d-1}}H-z \biggr)^{-1}_{\Root,
\leaf}=- \frac{(d-1)^{-\zeta/2}}{U_{\zeta+1}(z/2)},
\end{equation}
where $\leaf$ represents any leaf of $\mathbb{T}$.
%leaf-leaf element not needed anymore
%where $0\le\rho\le\zeta$ is the distance in the tree of the nearest
%common ancestor of $\leaf_1$ and $\leaf_2$ from the root.
%}
\end{longlist}
\end{lemma}
\begin{pf}
%The adjacency matrix is ill-defined since the we have not made clear
%the labeling of vertices. We choose the following recursive way of
%labeling.
%To prove conclusion
%
(i) Note that when $\zeta=0$ (i.e., the tree has only the root
vertex), the equality is trivially true. We proceed by induction.
Suppose the equality is true until depth $\zeta-1$. Consider a tree of
depth $\zeta$ and label the adjacency~$H$ matrix as above. Thus
%
%e20 ###
%
\begin{eqnarray}\label{blockrep}
&&\frac{1}{\sqrt{d-1}}H-z\nonumber\\[-4pt]\\[-12pt]
&&\qquad=
{\fontsize{10.7pt}{11pt}\selectfont{\left[\matrix{
\dfrac{1}{\sqrt{d-1}}H_1-z & & & & \vspace*{2pt}\cr
& \dfrac{1}{\sqrt{d-1}}H_2-z & & & u \vspace*{2pt}\cr
& & \cdots& &\vspace*{2pt}\cr
& & & \dfrac{1}{\sqrt{d-1}}H_{d-1}-z &\vspace*{2pt}\cr
&& u' && -z}\right]}}.\hspace*{-26pt}\nonumber
\end{eqnarray}
Here $u$ is the column vector representing the children of the root.
Notice that $u$ is $(d-1)^{-1/2}$ exactly at the $(d-1)$ coordinates
which are the last elements in each of the block matrices $H_1, \ldots
, H_{d-1}$ and zero elsewhere. The vector $u'$ is the transpose of $u$.

We now use formula (\ref{invform1}) treating the the final element
$[-z]$ as one block. Thus if $\varphi(\zeta)$ denote the element on
the left-hand side of (i) above, we get $\varphi(\zeta)=F^{-1}$, where
%
%e21 ###
%
\begin{equation}\label{whatisf}
F= -z - \frac{1}{d-1}\sum_{i\sim\Root} \varphi(\zeta-1)= - z -
\varphi(\zeta-1).
\end{equation}
Here ``$i \sim\Root$'' refers to the children of the root which are,
in their turn, the roots of trees of depth $\zeta-1$. The formula now
follows by induction.

(ii) We will use the three term recurrence formula for
continued fractions which we state below. More details can be found in
the excellent book by Lorentzen and Waadeland~\cite{LW}, pages 5 and 6.
Given sequences of complex numbers $\{ a_n\}$ and $\{ b_n\}$ and a
complex argument $\omega$, one can define a continued fraction
function with argument $\omega$ by defining
\[
S_n(\omega) = b_0 + \frac{a_1}{b_1+} \frac{a_2}{b_2+} \cdots\frac
{a_n}{b_n + \omega}.
\]
By Lemma 1.1 in~\cite{LW} we get the existence of complex sequences
$\{ A_n\}$ and~$\{B_n\}$ such that
\[
S_n(\omega)= \frac{A_{n-1}\omega+ A_n }{B_{n-1}\omega+ B_n}
\qquad\mbox{for } n=1,2,\ldots,
\]
where
%
%e22 ###
%
\begin{equation}\label{3term}
A_n = b_n A_{n-1} + a_n A_{n-2},\qquad B_n = b_n B_{n-1} + a_n B_{n-2}
\end{equation}
with initial values $A_{-1}=1, A_0=b_0, B_{-1}=0$ and $B_0=1$.

In our case we will take each $a_i=-1$ and each $b_i=z$, except
$b_0=0$. The recursions in (\ref{3term}) give us
\[
A_n = z A_{n-1} - A_{n-2},\qquad B_n = z B_{n-1} - B_{n-2}
\]
with the initial values $A_{-1}=1, A_0=0, B_{-1}=0$ and $B_0=1$.

Comparing with the recursions of the Chebyshev polynomials $U_n$ given
in (\ref{chebyrec}) we get that
\[
B_n(z)= U_n( z/2 ),\qquad A_n(z)= -U_{n-1}(z/2).
\]
Since clearly $\varphi(\zeta)=S_{\zeta+1}(0)$ we get formula (\ref
{rootroot}). This proves part (ii).

(iii) Since there is an obvious isomorphism of the tree that
can exchange the labeling of leaves, it is enough to consider the leaf
labeled $1$ in the adjacency matrix.
We will express
\[
\psi(\zeta)= \biggl( \frac{1}{\sqrt{d-1}} H - z
\biggr)^{-1}_{1,\Root}
\]
in terms of $\psi(\zeta-1)$.\vadjust{\goodbreak}

Let $N$ be the total number of vertices in the tree, and let $\mathcal
{A}$ be the diagonal block matrix which is the upper left $(N-1) \times
(N-1)$ block in (\ref{blockrep}).
Then from the formula of inverses of block matrices we get
\[
\psi(\zeta)= - F^{-1} (\mathcal{A}^{-1}u)_1,
\]
where\vspace*{1pt} $F$ is defined in (\ref{whatisf}). But $F^{-1}$ is $\varphi
(\zeta)$, and simplifying the elements of $w = \mathcal{A}^{-1}u$, we
see that $w(1) = (d-1)^{-1/2}\psi(\zeta-1)$. In other words, $\psi
(\zeta)=- (d-1)^{-1/2}\varphi(\zeta)\psi(\zeta-1)$.
We get by induction
\[
\psi(\zeta) = \frac{(-1)^{\zeta}}{(d-1)^{\zeta/2}} \prod
_{i=0}^{\zeta} \varphi(i),\qquad \zeta=1,2,\ldots.
\]
Now we substitute formula (\ref{rootroot}) to obtain (\ref{rootleaf}).
\begin{eqnarray*}
\psi(\zeta) &=& \frac{(-1)^{\zeta}}{(d-1)^{\zeta/2}} \prod
_{i=0}^{\zeta} - \frac{U_{i}(z/2)}{U_{i+1}(z/2)}= - \frac
{1}{(d-1)^{\zeta/2}}\frac{U_0(z/2)}{U_{\zeta+1}(z/2)}\\
&=&- \frac
{(d-1)^{-\zeta/2}}{U_{\zeta+1}(z/2)}.
\end{eqnarray*}
\upqed\end{pf}

Having now calculated the quantities $\varphi$ and $\psi$ for this
``slightly irregular'' tree~$\tree$, let us use them to find the
corresponding quantities for the regular one, where the root is
adjacent (just like all of the other nonleaf nodes) to precisely $d$
edges (and thus has $d$ children). We consider the same kind of
labeling as before.

Lemma~\ref{lemmatreeform2} below is a variation of Lemma \ref
{treeform} above.
\begin{lemma}\label{lemmatreeform2}
Let $\tree_d$ denote a $d$-regular tree of depth $\zeta\in\mathbb
{N}$ such that every vertex has degree $d$. Let $H_d$ denote the
adjacency matrix of the graph. The entries of its resolvent $(1/\sqrt
{d-1} H_d-z)^{-1}$ have the following properties:
\begin{longlist}[(ii)]
\item[(i)]
%
%e23 ###
%
\begin{eqnarray}\label{drootroot}
\varphi_d(\zeta):\!&=&\biggl( \frac{1}{\sqrt{d-1}}H_d-z
\biggr)^{-1}_{\Root, \Root}\nonumber\\[-8pt]\\[-8pt]
&=& - \frac{U_{\zeta}(z/2)}{U_{\zeta+1}(z/2) -
(d-1)^{-1} U_{\zeta-1}(z/2)};\nonumber
\end{eqnarray}
\item[(ii)] the above can also be represented as
%
%e24 ###
%
\begin{eqnarray}\label{drootleaf}
\psi_d(\zeta):\!&=&\biggl( \frac{1}{\sqrt{d-1}}H_d-z
\biggr)^{-1}_{\Root, \leaf}\nonumber\\[-8pt]\\[-8pt]
&=&- \frac{(d-1)^{-\zeta/2}}{U_{\zeta+1}(z/2) -
(d-1)^{-1} U_{\zeta-1}(z/2)},\nonumber
\end{eqnarray}
where $\leaf$ represents any leaf of $\mathbb{T}_d$.\vadjust{\goodbreak}

%leaf-leaf calculation removed
%&= -\frac{1}{d-1}\varphi_d(\zeta)\psi^2(\zeta-1) - \frac{1}{d-1}
%where $0 \leq\rho\leq\zeta$ is the distance in the tree of the
%nearest common ancestor of $\leaf_1$ and $\leaf_2$ from the root.
%
%The quantities $\varphi(\cdot)$ and $\psi(\cdot)$ are defined in Lemma
%}
\end{longlist}
\end{lemma}
\begin{pf}
The proof is identical to that of the last lemma, except that we need
to be careful in the first step of the recursion. Since the labeling of
the vertices has the same principle as before, we have
%
%e25 ###
%
\begin{equation}\label{dfromnond}
\biggl( \frac{1}{\sqrt{d-1}}H_d-z \biggr)^{-1}_{\Root, \Root}=
\frac{1}{- z - ({d}/({d-1})) \varphi(\zeta-1)},
\end{equation}
where $\varphi(\cdot)$ has been defined in (\ref{whatisf}). This
reflects the fact that the only change from before is in the number of
children on the root (used to be $d-1$, now is $d$).

Substituting the value of $\varphi$ from (\ref{rootroot}) we get
\begin{eqnarray*}
\biggl( \frac{1}{\sqrt{d-1}}H_d-z \biggr)^{-1}_{\Root, \Root
}&=&\biggl( -z+ \frac{d}{d-1}\frac{U_{\zeta-1}(z/2)}{U_{\zeta}(z/2)}
\biggr)^{-1}\\
&=& \frac{U_{\zeta}(z/2)}{-z U_{\zeta}(z/2) + U_{\zeta-1}(z/2) +
(d-1)^{-1} U_{\zeta-1}(z/2)}\\
&=& - \frac{U_{\zeta}(z/2)}{U_{\zeta+1}(z/2) - (d-1)^{-1} U_{\zeta-1}(z/2)}.
\end{eqnarray*}
The final step above follows from the recursion of the Chebyshev
polynomials given in (\ref{chebyrec}). This proves (i).
The proof of (ii) follows by a similar argument.
\end{pf}

Recall that we are ultimately interested in how close the Stieltjes
transform of the $d$-regular graph is to the Stieltjes transform of the
semicircle,~$s(z)$. Toward this goal, we will need some estimates for
the functions defined in Lemma~\ref{lemmatreeform2}.
\begin{lemma}\label{variousestimates}
Consider the functions defined in Lemmas~\ref{treeform} and \ref
{lemmatreeform2}. Then for all $z$ such that $z/2 \in E_r = \{ y
\in\mathbb{C}\dvtx \vert y+ \sqrt{y^2 -1} \vert=r \}$, for some $r
> 1$ such that $r^{-\zeta}<1/2$, one has the following estimates:
\begin{longlist}[(iii)]
\item[(i)] Consider $\varphi_d(\zeta)$ and $s(z) = -(z - \sqrt
{z^2-4})/2$. We have the following estimate:
%
%e26 ###
%
\begin{equation}\label{estvarphid}
\vert\varphi_d(\zeta) - s(z) \vert \le C_0\biggl[ \frac{2r^{-2\zeta}}{1
- r^{-2\zeta-2}} + \frac{1}{d-1}\biggr],
\end{equation}
where $C_0$ is a constant.
% that is bounded for all values of $r,\zeta$ such that $r^{-\zeta}<
%small.) %is away from one.
%
\item[(ii)] The following bound on $\psi(\cdot)$ holds:
%
%e27 ###
%
\begin{equation}\label{estpsi}
\vert\psi(\zeta) \vert \le\frac{r^{-\zeta-1}}{(d-1)^{\zeta/2}}
\frac
{2}{1 - r^{-2\zeta-4}}.
\end{equation}
\item[(iii)] Similarly,
%
%e28 ###
%
\begin{equation}\label{estpsid}
\vert\psi_d(\zeta) \vert \le C_0 \frac{r^{-\zeta-1}}{(d-1)^{\zeta/2}}
\frac{1}{1 - r^{-2\zeta-4}}.\vadjust{\goodbreak}
\end{equation}
\end{longlist}
\end{lemma}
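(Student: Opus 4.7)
The plan is to reduce everything to the uniformising variable $\widetilde w$ defined by $z/2 = (\widetilde w + \widetilde w^{-1})/2$ with $|\widetilde w| = r$, in which the Chebyshev formula \eqref{whatisun} gives
\[
U_n(z/2) = \frac{\widetilde w^{n+1} - \widetilde w^{-n-1}}{\widetilde w - \widetilde w^{-1}}.
\]
A quick algebraic identity, based on $z^2-4 = (\widetilde w - \widetilde w^{-1})^2$, identifies the semicircular Stieltjes transform as $s(z) = -\widetilde w^{-1}$. This unifies the three estimates because every quantity appearing in the lemma becomes a rational function of $\widetilde w$ whose denominators are controlled by the standing hypothesis $r^{-\zeta} < 1/2$ (equivalently $r^{-2\zeta-4} \le 1/4$).

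For parts (ii) and (iii) I would proceed directly. Using the lower bound in \eqref{chebyest} on $|U_{\zeta+1}(z/2)|$ and inserting it into the explicit formulas \eqref{rootleaf} and \eqref{drootleaf} yields (ii) immediately after noticing that $(r+r^{-1})/r \le 2$. For (iii) the same lower bound applies once one checks that the correction $(d-1)^{-1}U_{\zeta-1}(z/2)$ in the denominator of \eqref{drootleaf} is a lower-order perturbation: using the closed form $U_{\zeta-1}/U_{\zeta+1} = (\widetilde w^{-2} - \widetilde w^{-2\zeta-2})/(1 - \widetilde w^{-2\zeta-4})$, the triangle inequality and $r^{-\zeta}<1/2$ show that this ratio is bounded by an absolute constant (independent of $r$, which is important because $r$ will be close to $1$ in our applications). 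Hence $|U_{\zeta+1} - (d-1)^{-1}U_{\zeta-1}| \ge \tfrac12 |U_{\zeta+1}|$ for $d$ large, and (iii) follows from (ii) after absorbing constants into $C_0$.

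The substantive step is (i), which I would obtain by the triangle inequality
\[
\bigl|\varphi_d(\zeta) - s(z)\bigr| \le \bigl|\varphi_d(\zeta) - \varphi(\zeta)\bigr| + \bigl|\varphi(\zeta) - s(z)\bigr|,
\]
where $\varphi(\zeta) = -U_\zeta/U_{\zeta+1}$ is the ``slightly irregular'' root-to-root resolvent from \eqref{rootroot}. For the first term, writing $\varphi_d = \varphi/(1-\epsilon)$ with $\epsilon = (d-1)^{-1}U_{\zeta-1}/U_{\zeta+1}$ and using the uniform bounds on $|\varphi(\zeta)| = |U_\zeta/U_{\zeta+1}|$ and on $|\epsilon|$ obtained as in the previous paragraph gives $|\varphi_d - \varphi| = |\varphi|\,|\epsilon|/|1-\epsilon| = O(1/(d-1))$. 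For the second term, a one-line simplification using the $\widetilde w$ representation gives the clean identity
\[
\varphi(\zeta) - s(z) = \widetilde w^{-2\zeta-3}\cdot \frac{1 - \widetilde w^{-2}}{1 - \widetilde w^{-2\zeta-4}},
\]
whose modulus is bounded by $2r^{-2\zeta-3}/(1-r^{-2\zeta-4})$, which is in turn dominated by $2r^{-2\zeta}/(1-r^{-2\zeta-2})$. Combining, one gets exactly the form of \eqref{estvarphid}.

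The main subtle point, and the only place where I expect to have to be careful, is that in the applications $r = r_n \to 1$, so the ``constants'' appearing in the bounds of $|\varphi(\zeta)|$, $|\epsilon|$ and $(1 - \widetilde w^{-2})$ must be shown to stay bounded independently of how $r$ approaches $1$. The previously noted identity $U_{\zeta-1}/U_{\zeta+1} = (\widetilde w^{-2} - \widetilde w^{-2\zeta-2})/(1-\widetilde w^{-2\zeta-4})$, combined with $r^{-\zeta}<1/2$, does precisely that: both the numerator and $1-\widetilde w^{-2\zeta-4}$ are $O(1)$ and bounded away from $0$ respectively, uniformly in $r>1$. The analogous manipulation for $\varphi(\zeta)$ itself produces the factor $\widetilde w^{-1}(1-\widetilde w^{-2\zeta-2})/(1-\widetilde w^{-2\zeta-4})$, again uniformly bounded by an absolute constant. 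Once these uniform bounds are in place the three estimates assemble automatically.
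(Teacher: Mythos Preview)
Your proposal is correct and follows the same overall strategy as the paper: introduce the uniformising variable (the paper calls it $\omega$), identify $s(z)=-\omega^{-1}$, use the Chebyshev lower bound \eqref{chebyest} for (ii) and the ratio $U_{\zeta-1}/U_{\zeta+1}$ for (iii), and compute $\varphi(\zeta)-s(z)$ by the exact same one-line simplification you give. The only genuine difference is in how part~(i) is assembled. You use the purely algebraic decomposition $\varphi_d=\varphi/(1-\epsilon)$ with $\epsilon=(d-1)^{-1}U_{\zeta-1}/U_{\zeta+1}$ and the triangle inequality $|\varphi_d-s|\le|\varphi_d-\varphi|+|\varphi-s|$; the paper instead invokes the recursion \eqref{dfromnond}, $\varphi_d(\zeta)=(-z-\tfrac{d}{d-1}\varphi(\zeta-1))^{-1}$, together with the fixed-point identity $s(z)=(-z+s(z))^{-1}$, and bounds the resulting difference via $|\varphi_d(\zeta)|\le C_0$ and the already-proved estimate on $|\varphi(\zeta-1)-s(z)|$. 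Your route is a bit more direct and avoids appealing to the self-consistency equation for $s$; the paper's route ties the estimate more visibly to the recursive tree structure and reuses the bound on $|\varphi_d|$ that defines $C_0$. Either way the same final inequality drops out.
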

\begin{rmk}
The condition $r^{-\zeta} < 1/2$ is a priori more restrictive than
necessary for the purposes of Lemma~\ref{variousestimates}; since we
will in fact be interested in the case when $r^{-\zeta} = o(1)$, this
does not matter.
\end{rmk}
\begin{pf*}{Proof of Lemma~\ref{variousestimates}}
(i) Let $\omega= (z+\sqrt{z^2-4})/2= -1/s(z)$. Note that
$\vert\omega \vert=r > 1$. Recall from (\ref{drootroot}) and (\ref
{whatisun}) that
\begin{eqnarray*}
\varphi_d(\zeta)&=&\frac{\omega^{\zeta+1} - \omega^{-\zeta
-1}}{\omega^{\zeta+2} - \omega^{-\zeta-2} - ({1}/({d-1}))(
\omega^{\zeta} - \omega^{-\zeta} )}\\
&=& \frac{1}{\omega}
\frac{1 - \omega^{-\zeta-2}}{1 - \omega^{-2\zeta-4} -
({1}/({d-1}))( \omega^{-2} - \omega^{-2\zeta-2})}.
\end{eqnarray*}
Thus
\[
\vert\varphi_d(\zeta) \vert \le r^{-1}\frac{1 + r^{-\zeta-2}}{1 -
r^{-2\zeta-4} - (d-1)^{-1}(r^{-2} + r^{-2\zeta-2}) }
\]
as long as the right-hand side above is positive. Since $r > 1$ we get
$\varphi_d(\zeta)$ is bounded by an absolute constant $C_0$ when
$r^{-\zeta}< 1/2$.

We now estimate the quantity $\varphi(\zeta)$. By (\ref{rootroot})
and (\ref{whatisun}) we get
%
%e29 ###
%
\begin{equation}\label{varphidconstant}
\varphi(\zeta)= -\frac{\omega^{\zeta+1} - \omega^{-\zeta
-1}}{\omega^{\zeta+2} - \omega^{-\zeta-2}}=-\omega^{-1} \frac{1 -
\omega^{-2\zeta-2}}{1-\omega^{-2\zeta-4}}.
\end{equation}
Thus
\begin{eqnarray*}
\vert\varphi(\zeta) + \omega^{-1} \vert&=& \vert\varphi(\zeta) -
s(z) \vert
= \biggl\vert\omega^{-1}\biggl( 1 - \frac{1-\omega^{-2\zeta-2}}{1 - \omega
^{-2\zeta-4}} \biggr) \biggr\vert\\
&=& r^{-1} \biggl\vert\frac{\omega^{-2\zeta -2} -
\omega^{-2\zeta-4}}{1 - \omega^{-2\zeta-4}} \biggr\vert\\
&=& r^{-2\zeta-3} \biggl\vert\frac{1 - \omega^{-2}}{1 - \omega^{-2\zeta
-4}} \biggr\vert\le\frac{r^{-2\zeta-3}}{1 - r^{-2\zeta-4}}.
\end{eqnarray*}
%
%Thus

To get to $\varphi_d$, consider formula (\ref{dfromnond}). Note that
\[
-\frac{1}{z-s(z)}=-\frac{1}{z/2 + \sqrt{z^2 -4}/2 }=-\omega^{-1}=s(z).
\]
Thus
\[
\varphi_d(\zeta) - s(z) = - \frac{1}{z + ({d}/({d-1})) \varphi
(\zeta-1)} + \frac{1}{z - s(z)}.
\]
Hence
\begin{eqnarray*}
\vert\varphi_d(\zeta) - s(z) \vert &=& \biggl\vert s(z) - \frac{d}{d-1}
\varphi (\zeta-1) \biggr\vert\biggl\vert\frac{1}{z - s(z)} \biggr\vert
\biggl\vert\frac
{1}{z + ({d}/({d-1})) \varphi(\zeta-1)} \biggr\vert\\
&=&\biggl\vert\frac{d}{d-1}\bigl(s(z) - \varphi(\zeta-1) \bigr) - \frac{s(z)}{d-1}
\biggr\vert \vert\omega^{-1} \vert \vert\varphi_d(\zeta) \vert\\
&\le& C_0 \biggl[ \biggl(\frac{d}{d-1}\biggr)\frac{r^{-2\zeta-1}}{1
- r^{-2\zeta-2}} + \frac{r^{-1}}{d-1}\biggr] r^{-1}.
\end{eqnarray*}
Since $r >1$ this completes the proof.

\mbox{}\hphantom{i}(ii) We use the estimate on the Chebyshev polynomials given in
(\ref{chebyest}) and our assumption on $z$ to get
\begin{eqnarray*}
\vert\psi(\zeta) \vert &=& \frac{1}{(d-1)^{\zeta/2}} \biggl\vert\frac
{1}{U_{\zeta+1}(z/2)} \biggr\vert\le\frac{1}{(d-1)^{\zeta/2}} \frac
{r+r^{-1}}{r^{\zeta+2} - r^{-\zeta-2}}\\
&=&\frac{r^{-\zeta
-1}}{(d-1)^{\zeta/2}} \frac{1+r^{-2}}{1 - r^{-2\zeta-4}}.
\end{eqnarray*}

(iii)
This part is similar, since
\begin{eqnarray*}
\vert\psi_d(\zeta) \vert &=& \frac{1}{(d-1)^{\zeta/2}} \biggl\vert
\frac {1}{U_{\zeta+1}(z/2)} \biggr\vert \biggl\vert\frac{1}{1 - (d-1)^{-1}
U_{\zeta -1}(z/2)/U_{\zeta+1}(z/2)} \biggr\vert\\
&\le&\frac{r^{-\zeta-1}}{(d-1)^{\zeta/2}} \frac{1+r^{-2}}{1 -
r^{-2\zeta-4}} \biggl\{ \frac{1}{1 - (d-1)^{-1} \vert U_{\zeta
-1}(z/2)/U_{\zeta+1}(z/2) \vert}\biggr\}.
\end{eqnarray*}
Note that, by way of its definition, the constant $C_0$ in part (i) is
an upper bound on the final term, hence the estimate.
%
%Now for part (iv) we use the formula in \eqref{dleafleaf}.
%}
%Note that, from the respective definitions via Chebyshev polynomials,
%we get
%&= \frac{1}{(d-1)^{\zeta-i-1}} \frac{1}{U_{\zeta-i}(z/2) U_{
%
%Hence one gets
%U_{\zeta-i+1}(z/2)} ]
%}
%
%The first term on the right side of \eqref{gammabnd1} can be easily
%bounded in a similar way.
% \abs{\varphi_d(\zeta)\psi(\zeta-1)}^2 \le\frac{1}{(d-1)^{\zeta-1}}
%}
%
%Now, from the bounds on Chebyshev polynomials in \eqref{chebyest} we
%get
%r^{-(\zeta-i+2)})}\\
%&\le\frac{ r^{-2(\zeta-i)-1}}{(d-1)^{\zeta-i-1}}[
%].
%
%Thus
%]\\
%&\le\frac{4r^{-1}(d-1)^{-1}}{(1 - r^{-2(\zeta-\rho+1)})^2} \sum_{i=1}^
%&\le\frac{4r^{-2\zeta-1}(d-1)^{-\zeta-1}}{(1 - r^{-2(\zeta-
%&\le\frac{4r^{-2\zeta-1}(d-1)^{-\zeta-1}}{(1 - r^{-2(\zeta-
%]\\
%&\le4 r^{-2(\zeta-\rho)-1} \frac{(d-1)^{-(\zeta-
%r^{-2(\zeta-\rho+1)})^2}]
%Simplifying a bit further we arrive at the expression in
%there is an additional factor of $1/(d-1)$ in \eqref{gammabnd1}.
%
%The bound on the first term on the right side of \eqref{gammabnd1}
%comes from \eqref{estpsi} since
%here we have assumed the final expression to be positive (as will
%happen later in the text).
%}
\end{pf*}

%s3.3 ###
\subsection{From trees to regular graphs} \label{local2}

Consider now a (deterministic) $d$-regular graph $G$ with a
distinguished vertex called the root such that, for some $\zeta\ge1$,
the $(\zeta+1)$-neighborhood of the root is a tree. That is to say,
consider the subgraph consisting of all vertices in $G$ whose distance
from the root is at most $\zeta+1$ and the edges between them; we
assume that this subgraph has no cycles.

This gives us a natural partition of the graph. We denote the tree
subgraph induced by the root and all vertices of distance at most
$\zeta$ from the root by~$\tree_d$. We denote the ``boundary'' of
this graph, that is, the set of vertices that are at distance exactly
$\zeta$ from the root, by $\partial\tree_d$. The subgraph induced by
the vertices in the complement of $\tree_d$ will be denoted by $\tree
_d^c$, and its own boundary, that is, the set of vertices at distance
exactly $\zeta+1$ from the root, will be denote by $\partial\tree
_d^c$. For further clarification, please refer to Figure~\ref{treebdrys}.

%f1 ###
%
\begin{figure}

\includegraphics{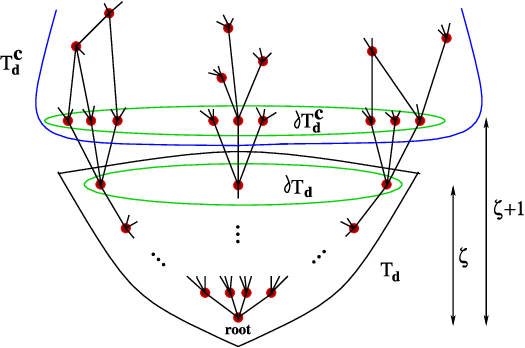}

\caption{Illustration\vspace*{1pt} of $\tree_d$, $\tree_d^c$, $\partial\tree_d$
and $\partial\tree_d^c$ starting at a vertex \textup{root}, with
maximal tree depth~$\zeta$. Note that the neighbors in $\tree_{d}^c$
of the vertices from $\partial\tree_d^c$ may, but need not continue
the tree-like structure. For this particular tree, $d=4$.} \label{treebdrys}
\end{figure}

Note that all edges between $\tree_d$ and $\tree_d^c$ are between
$\partial\tree_d$ and $\partial\tree_d^c$.

Additionally we will denote the set of vertices of $\tree_d$ (resp.,
$\tree_d^c$) by $V(\tree_d)$ [resp., $V(\tree_d^c)$].

Let $A$ denote the adjacency matrix of this graph, and let $H_d$ denote
the adjacency matrix of the subgraph $\tree_d$. Label the vertices of
$H_d$ as in Lemmas~\ref{treeform} and~\ref{lemmatreeform2}, and
write $A$ in the block matrix form
\[
A=
\left[\matrix{
D & B\cr
B' & H_d}\right].
\]
Here $D$ is the adjacency matrix of $\tree_d^c$, the matrix $B$
records only and all the edges between $\partial\tree_d$ and
$\partial\tree_d^c$ and we again use the notation $B'$ for the
transpose of
$B$.

We will now proceed to estimate how close the Stieltjes transform of
the regular graph is to that of the tree.
\begin{lemma}\label{estimateepsilon}
Fix a complex number $z$ such that $z \in E_r$ [see (\ref{whatiser})]
for some $r > 1$. Let $\varepsilon$ denote the quantity
\[
\varepsilon= \biggl( \frac{1}{\sqrt{d-1}}A - z \biggr)^{-1}_{\Root,
\Root} - \biggl( \frac{1}{\sqrt{d-1}} H_d -z \biggr)^{-1}_{\Root,
\Root}.
\]
We have the following bound:
\[
\vert\varepsilon \vert \le\biggl( \frac{2C_0^2}{ 1 - r^{-2\zeta-4}}
\biggr) \frac{r^{-2\zeta-2}}{\Im(z)}.
\]
\end{lemma}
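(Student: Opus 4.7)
The plan is to express $\epsilon$ via a block-matrix (Schur complement) inversion of $A/\sqrt{d-1} - z$, then exploit the rank-structure of the resulting low-rank perturbation by means of the Woodbury identity, so that two factors of $\psi_d(\zeta)$ appear naturally.

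First, I apply formula \eqref{invform1} to
\[
\frac{1}{\sqrt{d-1}} A - z = \begin{bmatrix} (d-1)^{-1/2} D - z & (d-1)^{-1/2} B \\ (d-1)^{-1/2} B' & (d-1)^{-1/2} H_d - z \end{bmatrix},
\]
identifying the $V(\tree_d)\times V(\tree_d)$ block of the inverse as $(R^{-1} - E)^{-1}$, where $R$ is the tree resolvent and
\[
E := \frac{1}{d-1}\,B' \Bigl( \frac{1}{\sqrt{d-1}} D - z \Bigr)^{-1} B.
\]
Because $B$ records only and all edges between $\partial \tree_d$ and $\partial \tree_d^c$, the matrix $E$ is supported on $\partial \tree_d \times \partial \tree_d$; write $E = \iota \tilde E \pi$ for the obvious inclusion/projection $\iota:\mathbb{C}^{\partial\tree_d}\hookrightarrow \mathbb{C}^{V(\tree_d)}$, $\pi:\mathbb{C}^{V(\tree_d)}\to\mathbb{C}^{\partial\tree_d}$.

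Second, I apply the Woodbury identity to $M := (R^{-1} - E)^{-1}$, obtaining
\[
M - R = R\iota\bigl(\tilde E^{-1} - \tilde K\bigr)^{-1}\pi R, \qquad \tilde K := R\bigr|_{\partial \tree_d \times \partial \tree_d}.
\]
The algebra collapses cleanly thanks to Lemma \ref{lemma:treeform2}(ii): $R_{\Root,\ell} = \psi_d(\zeta)$ for \emph{every} $\ell\in \partial \tree_d$, so $(R\iota)_{\Root,\cdot} = \psi_d(\zeta)\,\mathbf{1}^T$ and $(\pi R)_{\cdot,\Root} = \psi_d(\zeta)\,\mathbf{1}$. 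Extracting the $(\Root,\Root)$ entry yields the formula
\[
\epsilon = \psi_d(\zeta)^2\,\mathbf{1}_{\partial\tree_d}^{T}\,\bigl(\tilde E^{-1} - \tilde K\bigr)^{-1} \mathbf{1}_{\partial\tree_d},
\]
which already displays the crucial factor $\psi_d(\zeta)^2$. Combining $|\partial\tree_d|=d(d-1)^{\zeta-1}$ with Lemma \ref{variousestimates}(iii) gives $|\psi_d(\zeta)|^2\,|\partial\tree_d| \lesssim 2C_0^2\,r^{-2\zeta-2}/(1-r^{-2\zeta-4})^2$, producing the claimed $r^{-2\zeta-2}$ prefactor.

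Third, I estimate the bilinear form. Rewriting $(\tilde E^{-1}-\tilde K)^{-1} = (I - \tilde E\tilde K)^{-1}\tilde E$ and applying Cauchy-Schwarz:
\[
\bigl|\mathbf{1}^{T}(I-\tilde E\tilde K)^{-1}\tilde E\,\mathbf{1}\bigr| \le \sqrt{|\partial\tree_d|}\,\|(I-\tilde E\tilde K)^{-1}\|_{\mathrm{op}}\,\|\tilde E\mathbf{1}\|_2.
\]
The identity $B\mathbf{1}_{\partial\tree_d} = \mathbf{1}_{\partial\tree_d^c}$ (each $\ell\in\partial\tree_d$ has $d-1$ neighbors in $\partial\tree_d^c$) combined with $\|B\|_{\mathrm{op}}=\sqrt{d-1}$ (from $B'B=(d-1)I$ on $\partial\tree_d$) and the Hermitian resolvent bound $\|(D/\sqrt{d-1}-z)^{-1}\|_{\mathrm{op}}\le 1/\Im(z)$ give $\|\tilde E\mathbf{1}\|_2 \le \sqrt{|\partial\tree_d|}/\Im(z)$, contributing exactly the factor $|\partial\tree_d|/\Im(z)$ needed to complete the estimate.

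The main obstacle is controlling $\|(I-\tilde E\tilde K)^{-1}\|_{\mathrm{op}}$ by a constant uniformly in $z\in E_r$. Naive bounds give $\|\tilde E\|_{\mathrm{op}}\le 1/\Im(z)$ and $\|\tilde K\|_{\mathrm{op}}\le 1/\Im(z)$, which only guarantee convergence of the Neumann series when $\Im(z)\ge 1$---insufficient for $z \in E_r$ with $r$ close to $1$. To close the estimate I will exploit the explicit tree-distance decay of the entries of $\tilde K = R|_{\partial\tree_d\times\partial\tree_d}$, computable via $\phi(\rho)$-type formulas in the spirit of Lemma \ref{treeform}: grouping the matrix entries by the graph distance $2(\zeta-\rho)$ of their index pairs (equivalently, by the depth $\rho$ of the nearest common ancestor from the root), the row sums of $|\tilde K|$ telescope into a geometric series in $r^{-2}$ with a $z$-independent bound, so that the Schur test yields $\|\tilde K\|_{\mathrm{op}}\le C(r)$ uniformly in $\Im(z)$. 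This leaves only the single $1/\Im(z)$ factor from $\|\tilde E\mathbf{1}\|_2$, matching the claimed bound.
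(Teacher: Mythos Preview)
Your derivation of the identity $\epsilon = \psi_d(\zeta)^2\,\mathbf{1}^{T}(\tilde E^{-1}-\tilde K)^{-1}\mathbf{1}$ is correct, and so is the estimate $\|\tilde E\mathbf{1}\|_2\le \sqrt{|\partial\tree_d|}/\Im(z)$. The gap is exactly where you flagged it: bounding $\|(I-\tilde E\tilde K)^{-1}\|_{\mathrm{op}}$ by an absolute constant. Your proposed remedy does not close it. A Schur-test bound on $\tilde K$ controls only one of the two factors; you still have no control on $\|\tilde E\|_{\mathrm{op}}$ better than $1/\Im(z)$, since $\tilde E$ involves the resolvent of the \emph{unknown} complement graph $D$, about which nothing structural is assumed. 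Hence $\|\tilde E\tilde K\|_{\mathrm{op}}$ need not be small, the Neumann series need not converge, and there is no a priori reason for $(I-\tilde E\tilde K)^{-1}$ to have bounded norm. (Even your Schur-test estimate on $\tilde K$ gives a bound of order $(1-r^{-2})^{-1}$, not a universal constant, so for $r$ close to $1$ both factors are large.)

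The paper's argument sidesteps this entirely by using the \emph{other} block-inversion formula \eqref{invform2} instead of \eqref{invform1}$+$Woodbury. That formula gives directly
\[
\epsilon=\frac{1}{d-1}\,v'\,\mathbb{G}^{-1}\,v,\qquad v=B\,R\,e_{\Root}=\psi_d(\zeta)\,\mathbf{1}_{\partial\tree_d^c},
\]
where $\mathbb{G}^{-1}$ is the $V(\tree_d^c)\times V(\tree_d^c)$ block of the \emph{full} resolvent $\bigl((d-1)^{-1/2}A-z\bigr)^{-1}$. Since a principal submatrix of a resolvent inherits the bound $\|\cdot\|_{\mathrm{op}}\le 1/\Im(z)$, one gets $|v'\mathbb{G}^{-1}v|\le \|v\|^2/\Im(z)$ for free; together with $\|v\|^2=d(d-1)^{\zeta}|\psi_d(\zeta)|^2$ and Lemma~\ref{variousestimates}(iii) this yields the claimed bound in two lines. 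In fact your middle matrix satisfies $(I-\tilde E\tilde K)^{-1}\tilde E=\tfrac{1}{d-1}\,\pi B'\,\mathbb{G}^{-1}\,B\iota$, so the object you were trying to control by Neumann series is itself a compression of the full-graph resolvent; recognizing this is the missing idea.
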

\begin{pf} Define the vector
\[
v=B\biggl( \frac{1}{\sqrt{d-1}} H_d - z \biggr)^{-1}e_{\Root},
\]
where $e_{\Root}$ is the vector that puts mass at the root vertex and
zero elsewhere.

Then by using the formula for the inverse of block matrices (\ref
{invform2}) we get
%
%e30 ###
%
\begin{equation}\label{epsilonlong}
\varepsilon= \frac{1}{d-1} \biggl\{ v'\biggl( \frac{1}{\sqrt{d-1}}D-z
- \frac{1}{d-1}B \biggl( \frac{1}{\sqrt{d-1}}H_d - z \biggr)^{-1}B'
\biggr)^{-1}v \biggr\}.\hspace*{-35pt}
\end{equation}

Our first job is to estimate the elements of the vector $v$. From the
definition it is clear that the rows of $B$ (and columns of $B'$) are
labeled by the vertices of $\tree_d^c$. We write $B_{i*}$ to designate
the $i$th row of $B$.

We obtain
%
%e31 ###
%
\begin{eqnarray}\label{definev}
v_i &=& B_{i*}\biggl( \frac{1}{\sqrt{d-1}} H_d - z
\biggr)^{-1}e_{\Root} = \sum_{k \in V(\tree_d)} B_{ik} \biggl( \frac
{1}{\sqrt{d-1}} H_d - z \biggr)^{-1}_{k, \Root}\nonumber\hspace*{-35pt}\\[-8pt]\\[-8pt]
&=&\sum_{k \in V(\tree_d^c)} B_{ik} \biggl( \frac{1}{\sqrt{d-1}} H_d
- z \biggr)^{-1}_{\Root,k}\qquad \mbox{by symmetry of $H_d$}.
\nonumber\hspace*{-35pt}
\end{eqnarray}

Note that $B_{ik}$ is positive (i.e., $1$) if and only if $i \in
\partial\tree_d^c$, $k \in\partial\tree_d$ and $i$ and~$k$ have an
edge between them. Thus:
\begin{longlist}[(iii)]
\item[(i)] $v_i=0$ unless $i \in\partial\tree^c_d$.
\item[(ii)] When $i \in\partial\tree_d^c$,
%
%e32 ###
%
\begin{equation}\label{whatisvi}
v_i = \sum_{k \in\partial\tree_d, k\sim i} \biggl( \frac{1}{\sqrt
{d-1}} H_d - z \biggr)^{-1}_{\Root,k}= \psi_d(\zeta).
\end{equation}
Here $\psi_d(\zeta)$ has been defined in (\ref{drootleaf}). The fact
that there is exactly one $k \in\partial\tree_d$ such that $i \sim
k$ follows from our assumption that the $(\zeta+1)$ neighborhood of
the root is a tree (see Figure~\ref{treebdrys}).
\item[(iii)] By counting the number of elements in $\partial\tree
_d^c$, we get
%
%e33 ###
%
\begin{equation}\label{normv}
\Vert v \Vert^2= d(d-1)^\zeta\vert\psi_d(\zeta) \vert^2\le C_0^2
\frac{d
r^{-2\zeta-2}}{1 - r^{-2\zeta-4}},
\end{equation}
where the final estimate is from (\ref{estpsid}).
\end{longlist}

Note that the matrix
\[
\biggl( \frac{1}{\sqrt{d-1}}D-z - \frac{1}{d-1}B \biggl( \frac
{1}{\sqrt{d-1}}H_d - z \biggr)^{-1}B' \biggr)
\]
is precisely the matrix $\mathbb{G}$ appearing in (\ref{invform2}).
In particular, it is the top left block of the matrix $
[(d-1)^{-1/2}A - z]^{-1}$. Hence, by padding the vector $v$ with
extra zeros, we get a vector $\bar v$ such that
\[
\varepsilon= \frac{1}{d-1} \bar v' \biggl( \frac{1}{\sqrt{d-1}}A -z
\biggr)^{-1}\bar v.
\]

Since the matrix $A$ is real symmetric, it has only real eigenvalues.
It follows from spectral decomposition of the real, symmetric matrix
$A$, that\vadjust{\goodbreak} for any real vector $y$
%
%e34 ###
%
\begin{equation}
y'\biggl( \frac{1}{\sqrt{d-1}}A - z \biggr)^{-1}y \le\frac{\Vert y \Vert
^2}{\Im(z)}.
\end{equation}

Observe from (\ref{whatisvi}) that $v=\psi_d(\zeta)e$, where $e$ is
a real vector of ones and zeroes which is one precisely for the labels
corresponding to $\partial\tree_d$.

Combining our previous observations, we get
\[
v'\biggl( z - \frac{1}{\sqrt{d-1}} D - \frac{1}{d-1} \chi
\biggr)^{-1}v = \psi_d^2(\zeta) e'\biggl( \frac{1}{\sqrt{d-1}} A -
z\biggr)^{-1}e.
\]

Now using the bound from (\ref{normv}) in (\ref{epsilonlong}) we get
\begin{eqnarray*}
\vert\varepsilon \vert &\le& \frac{\vert\psi_d^2(\zeta) \vert
}{d-1}\frac{\Vert e \Vert^2}{\Im(z)}=\frac{1}{d-1}\frac{\Vert v
\Vert^2}{\Im(z)} \le
C_0^2\frac{d}{d-1} \biggl( \frac{r^{-2\zeta-2}}{1 - r^{-2\zeta
-4}}\biggr)\frac{1}{\Im(z)}\\
&\le& \biggl( \frac{2C_0^2}{ 1 - r^{-2\zeta-4}} \biggr) \frac
{r^{-2\zeta-2}}{\Im(z)}.
\end{eqnarray*}
This completes the proof of the lemma.
\end{pf}

We now arrive at our main result about \textit{deterministic} regular
graphs. Consider the set-up as in Lemma~\ref{estimateepsilon}. Now a
consider a sequence of graphs~$G_n$ such that $G_n$ is $d_n$-regular.
Each $G_n$ has a marked vertex called the root such that for some
sequence $\{\zeta_n\}$, the $\zeta_n+1$ neighborhood of the root is
acyclic in $G_n$.
\begin{lemma}\label{thmtriangle}
Assume that that sequences $\{d_n\}$ and $\{\zeta_n\}$ both tend to
infinity with $n$ with the following restriction. There exists a
sequence $r_n$ such that
%
%e35 ###
%
\begin{equation}\label{mainassumption}
r_n= e^{d_n^{-\alpha}}\qquad \mbox{for some $0 < \alpha< 1$}\quad
\mbox{and}\quad
r_n^{-\zeta_n} = o(1/d_n).
\end{equation}
%
%Then, if $A_n$ denotes the adjacency matrix of the graph $G_n$, for all
%$d_n$ such that $d_n \ge(\log2)^{-1/\alpha}$,
%we have
Let $U_n$ denote all complex numbers $z$ such that $\Im(z) > (r_n-r^1_n)/2$.
If $A_n$ denotes the adjacency matrix of the graph $G_n$, for all $d_n$ such
that $d_n \geq (\log 2)^{1/\alpha}$, then for all $z$, eventually in
every $U_n$, we have
%
%e36 ###
%
\begin{equation}\label{mainbound}
\biggl| \biggl( \frac{1}{\sqrt{d_n-1}} A_n - z \biggr)^{-1}_{\Root,
\Root} - s(z) \biggr| = O( 1/d_n ),
\end{equation}
where $s(z)$ is the Stieltjes transform of the semicircle law
$s(z)=-z/2 + \sqrt{z^2-4}/2$.
\end{lemma}
\begin{pf} Under our assumptions certain simplifications are immediate.
The constant $C_0$ appearing in Lemma~\ref{variousestimates} (and
later) can be taken to be an absolute constant. Since $r_n^{-\zeta_n}=
o(1/d_n)$, we can choose $C_0$ large enough in~(\ref{estvarphid}) such that
%
%e37 ###
%
\begin{equation}\label{diffpart1}
\vert\varphi_{d_n}(\zeta_n)-s(z_n) \vert\le C_0/d_n.\vadjust{\goodbreak}
\end{equation}
Here $z_n$ is any sequence of complex numbers such that $z_n/2$ belongs
to the ellipse $E_{r_n}$ defined in (\ref{whatiser}).

Note that
\[
\varphi_{d_n}(\zeta_n)= \biggl( \frac{1}{\sqrt{d_n-1}} A_n - z_n
\biggr)^{-1}_{\Root, \Root}.
\]
We now use Lemma~\ref{estimateepsilon}.
%For notational clarity we drop the subscript $n$ from the letters $

Consider now
\[
\Im(z) \geq \frac{r_n - r_n^{-1}}{2} = \frac{1}{2}[
e^{d_n^{-\alpha}} - e^{- d_n^{-\alpha}} ].
\]
One can easily verify the inequality
%
%e38 ###
%
\begin{equation}\label{ineqsinh}
\frac{e^{x} - e^{-x}}{2} \ge x/2\qquad \mbox{for all } x\le\log2.
\end{equation}
Applying this inequality above, we get $\Im(z)\ge d_n^{-\alpha}/2$
for all $d_n$ as stated in the lemma.

Combining with Lemma~\ref{estimateepsilon} we get
\[
\vert\varepsilon \vert \le2 C_0^2 \frac{r_n^{-2\zeta_n}}{1 -
r_n^{-2\zeta
_n - 4}} d_n^\alpha\le C_1 \frac{1}{d_n^2} d_n^\alpha= o(
1/d_n ).
\]
Combining with (\ref{diffpart1}) this completes the proof of the theorem.
\end{pf}

We will need the following lemma to show that the previous ``tree
approximation'' result about \textit{deterministic} regular graphs can
be applied to the random graph by choosing \textit{almost any} vertex as
the ``root.''
\begin{lemma}\label{treeapprox}
Let $d_n= (\log n)^\gamma$, for some positive $\gamma$. Let $\{r_n\}$
be as in Lem\-ma~\ref{thmtriangle}.
Let $\eta_n$ be defined as $(r_n - r_n^{-1})/2$. For some $\beta> 1$,
we define the sequence $\{\zeta_n, n=1,2,\ldots\}$ satisfying
%
%e39 ###
%
\begin{equation}\label{condrn}
\zeta_n= \frac{1}{4}\frac{\log n}{\log(d_n-1)} - \beta.
\end{equation}

Let $J(n)$ be the set of vertices in $G_n$ whose $\zeta
_n$-neighborhoods are acyclic and let $\vert J(n) \vert$ denote its size.
Let $\Omega(n)$ be the event
%
%e40 ###
%
\begin{equation}\label{whatisomega2}
{\vert J(n) \vert}/{n} > 1- \eta_n/d_n.
\end{equation}
One can choose $\beta$ no larger than $4$ such that
%
%e41 ###
%
\begin{equation}\label{evecnt2}
P( \Omega^c(n))\le o\biggl( \frac{1}{n} \biggr).
\end{equation}
\end{lemma}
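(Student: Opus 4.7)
The strategy is to repeat the argument of Lemma \ref{treeapp1} with the fixed radius $r$ replaced by the growing sequence $\zeta_n$. Exactly as in \eqref{whatisnr}--\eqref{taunn}, we bound
\[ n - |J(n)| \le N^*_{\zeta_n} := \sum_{s=3}^{2\zeta_n} 2s(d_n-1)^{(2\zeta_n-s)/2} M_s, \]
and apply Chebyshev's inequality at threshold $n\eta_n/d_n$. Before invoking the McKay--Wormald--Wysocka estimates \eqref{evarm}, I would verify their hypothesis \eqref{whatisgn} at $g(n) = 2\zeta_n$: the definition of $\zeta_n$ gives $(d_n-1)^{4\zeta_n - 1} = n/(d_n-1)^{4\beta+1} = o(n)$, so the Poisson approximation remains valid out to cycle-length $2\zeta_n$.

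Next, reproduce the moment computations of Lemma \ref{treeapp1} verbatim with $r = \zeta_n$. The dominant contribution to $E(N^*_{\zeta_n})^2 = \var(N^*_{\zeta_n}) + (EN^*_{\zeta_n})^2$ is the squared first moment $(EN^*_{\zeta_n})^2 = O((d_n-1)^{4\zeta_n}) = O(n/(d_n-1)^{4\beta})$; the two variance terms are each smaller by factors that are polynomial in $\zeta_n$ and $d_n$ divided by a positive power of $n$, and are therefore negligible since $\zeta_n$ and $d_n$ are only polylogarithmic in $n$. Chebyshev's inequality then yields
\[ P(\Omega(n)^c) \le P\bigl( N^*_{\zeta_n} > n\eta_n/d_n \bigr) = O\!\left(\frac{d_n^2}{n\,\eta_n^2\,(d_n-1)^{4\beta}}\right). \]
Using $\eta_n = \sinh(d_n^{-\alpha}) \ge d_n^{-\alpha}/2$ from \eqref{ineqsinh}, this quotient is $O\bigl(d_n^{2+2\alpha-4\beta}/n\bigr)$, which is $o(1/n)$ whenever $4\beta > 2+2\alpha$. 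Since $0 < \alpha < 1$, the hypothesis $\beta > 1$ in the definition of $\zeta_n$ more than suffices, and in particular one may take $\beta \le 4$.

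The main obstacle is the upgrade from the $o(1)$-style tail bound of Lemma \ref{treeapp1} to an $o(1/n)$ bound. The threshold $\eta_n/d_n$ is now only polylogarithmically small in $n$ rather than polynomially small, so there is much less room in Chebyshev's inequality; this is precisely what forces the condition $\beta > 1$ and what requires careful bookkeeping to ensure that the polynomial-in-$\zeta_n$ corrections arising from \eqref{evarm} remain subdominant to the leading second-moment term $n/(d_n-1)^{4\beta}$.
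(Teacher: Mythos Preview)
Your proposal is correct and follows essentially the same argument as the paper: bound $n-|J(n)|$ by $N^*_{\zeta_n}$, verify the McKay--Wormald--Wysocka hypothesis with $g(n)=2\zeta_n$, compute the second moment, and apply Markov/Chebyshev at level $n\eta_n/d_n$. Your bookkeeping of the exponent $(d_n-1)^{4\zeta_n}=n/(d_n-1)^{4\beta}$ is in fact slightly cleaner than the paper's (which carries the exponent as $-\beta$ rather than $-4\beta$ and consequently arrives at the more restrictive sufficient condition $\beta>2+2\alpha$), but the structure of the argument is identical.
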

\begin{pf} The proof of this result is very similar to the proof of
Lemma~\ref{treeapp1}. We again use the estimates of McKay, Wormald and
Wysocka~\cite{mww04} on the Poisson approximation to the number of
short cycles. Consider a sequence~$\{\zeta_n\}$ as in the statement.
It is clear from the choice that
%
%e43 ###
%e42 ###
%
\begin{equation}\label{someeq}
4\zeta_n\log(d_n-1) = \log n - \beta\log(d_n-1)\vadjust{\goodbreak}
\end{equation}
or
\begin{equation}
(d_n-1)^{4\zeta_n} = \frac{n}{(d_n-1)^{\beta}}=o(n).
\end{equation}
Thus, we can take $g=2\zeta_n$ in (\ref{whatisgn}).

The argument is essentially the same as the one used in the proof of
Lemma~\ref{treeapp1}; rather than repeating it, we choose to only
highlight the differences.

%To keep notational clarity, we write $\zeta$, $d$, and $\eta$ for $

The total number of vertices whose $\zeta_n$-neighborhoods are not
acyclic can be bounded above by
\[
N^*_{\zeta_n}=\sum_{s=3}^{2\zeta_n} s (d_n-1)^{(2\zeta_n-s)/2} M_s,
\]
where $M_s$ is the number of cycles of length $s$.

Taking expectations and variances above we get
\[
E N^*_{\zeta_n} = O\bigl( (d_n-1)^{2\zeta_n} \bigr)
\]
and
\[
\var(N^*_{\zeta_n}) \le \zeta_n^2(2\zeta_n+1) (d_n-1)^{2\zeta
_n} + \zeta_n^2 (2\zeta_n+d_n) o(1).
\]

We now want to use Chebyshev's inequality on the quantity $1-\vert J(n)
\vert/n$.
\begin{eqnarray*}
P\biggl( 1 - \frac{\vert J(n) \vert}{n} > \frac{\eta_n}{d_n} \biggr) &
\le& P( N^*_{\zeta_n} > {n\eta_n}/{d_n} )
\le \frac{d_n^2}{n^2\eta_n^2} E( N^*_{\zeta_n} )^2
\\
& \le& \frac{3d_n^2\zeta_n^3 (d_n-1)^{2\zeta_n}}{n^2\eta_n^2} +
\frac{3\zeta_n^3d_n^2o(1)}{n^2\eta_n^2} \\
&&{}+ \frac{d_n^2}{n\eta
_n^2}O\biggl( \frac{ (d_n-1)^{4\zeta_n}}{n} \biggr).
\end{eqnarray*}

Let us analyze the three terms that appear above. We use inequality
(\ref{ineqsinh}) to obtain $\eta_n \ge d_n^{-\alpha}/2$. Using
(\ref{someeq}) we get
\begin{eqnarray*}
\frac{3d_n^2\zeta_n^3 (d_n-1)^{2\zeta_n}}{n^2\eta_n^2} &\le& 3 d_n^2
\zeta_n^3 \frac{\sqrt{n}}{(d_n-1)^{\beta/2}} \frac{1}{n^2} 4
d_n^{2\alpha}= \frac{\mathrm{poly}\mbox{-}\mathrm{log}\
n}{n^{3/2}},\\
\frac{3\zeta_n^3 d_n^2 o(1)}{n^2 \eta_n^2} &\le& \frac{12\zeta_n^3
d_n^{2+ 2\alpha} o(1)}{n^2}= \frac{\mathrm{poly}\mbox{-}\mathrm
{log}\ n}{n^{2}} o(1).
\end{eqnarray*}
The leading term is the last term on the right which is of the order
$d_n^{2-\beta}/(n\eta_n^2)$, because by (\ref{someeq}) we get
$(d_n-1)^{4\zeta_n}/n= (d_n-1)^{-\beta}$.

We now choose $\beta> 2\alpha+2$ (and thus no larger than $4$, since
$\alpha<1$) such that
\[
\frac{d_n^{2-\beta}}{\eta_n^2} \le4d_n^{2-\beta} d_n^{2\alpha} =o(1).
\]
This completes the proof of the lemma.
\end{pf}

%s3.4 ###
\subsection{The Stieltjes transform} \label{local3}

We bring now all the results of the previous sections together. Below
is the first theorem of this section.
\begin{theorem}\label{stielapprox}
Consider a sequence of $d_n$-regular graphs on $n$ vertices where $d_n=
(\log n)^\gamma$ for\vadjust{\goodbreak} some $\gamma> 0$. Let $A_n$ be the adjacency
matrix of the graph, and consider the Stieltjes transform
\[
s_n(z) = \tr\biggl( \frac{1}{\sqrt{d_n-1}} A_n - z \biggr)^{-1},\qquad
\Im(z) > 0.
\]
Let $\eta_n=(r_n - r_n^{-1})/2$ where $r_n = \exp( d_n^{-\alpha
} )$ for some $0 < \alpha< \min(1, 1/\gamma)$.

Let $U_{n}$ denote all complex numbers $z$ such that $\Im(z) >\eta
_n$. Then there is a large enough constant $C > 0$ such that the
Stieltjes transform of the empirical eigenvalue distribution of the
$n\times n$ matrix $A_n$ satisfies
\[
P\Bigl( \sup_{z\in U_{n}}\vert s_n(z) - s(z) \vert > {C}/{d_n} \Bigr)
\le o( 1/n ).
\]
Here $s(z)$ refers to the Stieltjes transform of the semicircle law.
\end{theorem}
\begin{pf}%{Proof of Theorem~\ref{stielapprox}}
We condition on the event $\Omega(n)$ which, from Lemma \ref
{treeapprox}, happens with a probability of at least $1-o(1/n)$.

Consider the set $J(n)$ from the Lemma~\ref{treeapprox} and write
%
%e44 ###
%
\begin{equation}\label{addjn}
s_n(z) = \frac{1}{n} \sum_{k\in J(n)} \biggl( \frac{1}{\sqrt
{d_n-1}} A_n - z \biggr)^{-1}_{k,k} + \frac{1}{n} \sum_{k\notin
J(n)} \biggl( \frac{1}{\sqrt{d_n-1}} A_n - z \biggr)^{-1}_{k,k}.\hspace*{-35pt}
\end{equation}

Now, the spectral norm of $(\frac{1}{\sqrt{d_n-1}} A_n - z
)^{-1}$ is bounded above by $1/\Im(z)$, which in turn is
bounded above by $\eta_n^{-1}$ for all $z \in U_n$. Thus, for all $z
\in U_n$ and all $k$, we have the obvious bound $(\frac{1}{\sqrt
{d_n-1}} A_n - z )^{-1}_{k,k}\le\eta_n^{-1}$.

Summing up over all $k \notin J(n)$, we get
\[
\frac{1}{n} \sum_{k\notin J(n)} \biggl\vert\biggl( \frac{1}{\sqrt {d_n-1}}A_n
- z \biggr)^{-1}_{k,k} \biggr\vert \le\frac{n-\vert J(n) \vert}{n\eta
_n} \le\frac{1}{d_n},
\]
where the final inequality holds on the event $\Omega$. Thus, we get
\[
s_n(z) = \frac{1}{n} \sum_{k\in J(n)} \biggl( \frac{1}{\sqrt
{d_n-1}} A_n - z \biggr)^{-1}_{k,k} + O\biggl( \frac{1}{d_n} \biggr).
\]

Now, on the event $\Omega(n)$, every $k \in J(n)$ can be considered as
the root for a $d_n$-regular tree of depth $\zeta_n$. We can now apply
Lemmas~\ref{estimateepsilon} and~\ref{thmtriangle}. Note that,
technically, the $\zeta_n$ in Lemma~\ref{treeapprox} and the $\zeta$
used in Lemma~\ref{estimateepsilon} differ by at most one; however,
this does not affect the following calculations.

We verify the following assumptions from Lemma~\ref{thmtriangle}:
\begin{eqnarray*}
r_n^{-\zeta_n} &=& \exp\biggl( -\frac{1}{4}d_n^{-\alpha}\frac{\log
n}{\log(d_n-1)} + \beta d_n^{-\alpha} \biggr)\\
&=& \exp\biggl( -\frac{1}{4}\frac{(\log n)^{1-\alpha\gamma}}{\log
(d_n-1)} + \frac{\beta}{(\log n)^{\alpha}} \biggr) \\
& \le& C \exp\biggl(- \frac{(\log n)^{1- \alpha\gamma}}{4\gamma
\log\log n} \biggr) \ll\exp( - \gamma\log\log n ) =
\frac{1}{d_n},
\end{eqnarray*}
whenever $\alpha\gamma<1$.

We now combine our error estimate (\ref{mainbound}) to see that for
any $k\in J(n)$, on the event $\Omega(n)$, and for all $z\in U_n$, we have
%
%e45 ###
%
\begin{equation}\label{errork1}
\biggl\vert\biggl(\frac{1}{\sqrt{d_n-1}} A_n-z\biggr)^{-1}(k,k) - s(z)
\biggr\vert = O\biggl( \frac{1}{d_n} \biggr),
\end{equation}
where the constants in the $O(\cdot)$ above does not depend on $k$.

Combining this with decomposition (\ref{addjn}) we get
%
%e46 ###
%
\begin{equation}\label{diff7}
\sup_{z\in U_n}\vert s_n(z) - s(z) \vert = O\biggl( \frac{1}{d_n} \biggr)
\end{equation}
on the event $\Omega(n)$, which holds with probability at least $1 -
o(1/n)$. This completes the proof of the lemma.
\end{pf}

Recall now the setup of Theorem~\ref{localsemic}. Fix $\delta>0$.
Let $d_n, \eta_n$ be as in Theorem~\ref{stielapprox}. Then we will
show that there exists an $N$ large enough such that for all $n \geq
N$, for any interval $I \subset\mathbb{R}$ of length $|I| \geq\max\{
2\eta_n, \eta_n / (-\delta\log\delta)\}$,
\[
\biggl\vert\ecount_I - n \int_I \dsc(x) \,dx \biggr\vert < \delta n \vert I
\vert
\]
with probability at least $1-o(1/n)$.

Here $\ecount_I$ is the number of eigenvalues of $W_n$ in the interval
$I$, and $\dsc$ refers to the density of the semicircle law as in
(\ref{whatisfsc}).
\begin{pf*}{Proof of Theorem~\ref{localsemic}}
Theorem~\ref{stielapprox} leads to Theorem~\ref{localsemic} whose
proof follows almost identically to Lemma 60 in the article by Tao and
Vu~\cite{taovu}. The only major difference between our theorem and
Lemma 60 of~\cite{taovu} is the fact that our interval~$I$ can lie
anywhere on the real line and is not restricted to a subset of $(-2,2)$
as Lemma 60 requires. We provide an outline of the argument but skip
the details.

The idea lies in the observation that a good control over the Stieltjes
transform near the real line allows one to invert the transform and
have an estimate of the empirical spectral density. This is due to the
following inversion formula:\vadjust{\goodbreak} if $G$ is a continuous distribution on the
real line with Stieltjes transform $s_G$, one gets
\[
G[a,b] = \lim_{\varepsilon\rightarrow0} \frac{1}{\pi} \int_a^b \Im
\bigl( s_G(x+i\varepsilon) \bigr)\,dx.
\]

%We will throughout drop subscripts and denote $\eta_n$ by $\eta$.
Fix an interval $I \subseteq[-2,2]$ such that $\vert I \vert\ge2\eta$.
Define the function
\[
F(y) = \frac{1}{\pi} \int_I \frac{\eta_n}{\eta_n^2 + (y-x)^2}\,dx.
\]
Then it follows that if $\lambda_i$ denotes the $i$th eigenvalues of
the matrix $\frac{1}{\sqrt{d_n-1}}A_n$, then
\begin{eqnarray*}
\frac{1}{n}\sum_{i=1}^n F(\lambda_i) &=& \frac{1}{\pi}\int_I \frac
{1}{n}\sum_{i=1}^n \frac{\eta_n}{\eta_n^2 + (\lambda_i-x)^2}\,dx\\
&=&\frac{1}{\pi}\int_I \Im\bigl( s_n(x+i\eta_n) \bigr)\,dx.
\end{eqnarray*}
Also, if $\dsc$ denotes the density of the semicircle law, we get
\[
\int_{-2}^2 F(y) \dsc(y)\,dy = \frac{1}{\pi} \int_I \Im\bigl(
s(x+i \eta_n) \bigr)\,dx.
\]

Using the approximation between $s_n$ and $s$ obtained in Theorem \ref
{stielapprox}, we get
\begin{eqnarray*}
\Biggl\vert\frac{1}{n}\sum_{i=1}^n F(\lambda_i) - \int_{-2}^2 F(y)
\dsc(y)\,dy \Biggr\vert &\le& \frac{1}{\pi}\int_I \vert s_n(x+i\eta_n)-
s(x+i\eta_n) \vert \,dx \\
&\le& C\frac{\vert I \vert}{d_n}\qquad \mbox{with probability } 1 -
o\biggl( \frac{1}{n}\biggr).
\end{eqnarray*}
Choose $n$ large enough such that $1/d_n \le\delta$ for all
subsequent $n$. Thus
\[
\frac{1}{n}\sum_{i=1}^n F(\lambda_i) = \int_{-2}^2 F(y) \dsc(y)\,dy
+ O( \delta\vert I \vert )
\]
with probability $1 - o( {1}/{n})$.

Now following the bounds in~\cite{taovu}, page 60, proof of Lemma 64,
we obtain the bounds
\begin{eqnarray*}
\int_{-2}^2 F(y) \dsc(y) \,dy &=& \int_I \dsc \,dy + O\biggl( \eta_n
\log\frac{\vert I \vert}{\eta_n} \biggr),\\
\frac{1}{n}\sum_{i=1}^n F( \lambda_i ) &=& \frac
{\ecount}{n} + O\biggl( \eta_n \log\frac{\vert I \vert}{\eta_n} \biggr).
\end{eqnarray*}

Putting all these together we obtain that with probability $1 -
o(1/n)$, one has
\[
{\ecount_I} - n\int_I \dsc \,dy = O(n\delta\vert I \vert) +
O\biggl( n\eta_n \log\frac{\vert I \vert}{\eta_n} \biggr).
\]
Finally, as observed in~\cite{taovu}, the latter term can be absorbed
in the former since $\vert I \vert\ge\eta_n/\delta\log(1/\delta
)$. This
completes the proof of the theorem.
\end{pf*}

%s4 ###
\section{Delocalization of eigenvectors}\label{eigenvec}

Closely related to approximation of the empirical spectral distribution
is the fact that the $\ltwo$-norm of a normalized eigenvector
restricted to a \textit{large} subset of the vertices cannot be
\textit{small}. We follow Definition~\ref{deloc} and prove Theorem
\ref{deloc}.

Recall the set-up of Theorem~\ref{localsemic}. Fix $\delta>0$.

Let $T_n \subseteq\{1,2,\ldots,n\}$ be a sequence of sets of size
$L_n = o(\eta_n^{-1})$. Let $\Omega_1(n)$ be the event that some
unit-norm eigenvector of the matrix $A_n$ is $(T_n,\delta)$ localized;
that is,
\[
\Omega_1(n) = \{ \exists i\dvtx\|v_i|_{T_n}\|^2 \geq1- \delta\mbox{,
for some } v_i \mbox{ such that } A_n v_i= \lambda_i v_i \}.
\]
Then, for all sufficiently large $n$, we will show that
\[
P( (\Omega_1(n))^c ) \ge e^{-L_n \eta_n / d_n} \biggl( 1
- o\biggl( \frac{1}{d_n} \biggr)\biggr) =
1 - o\biggl( \frac{1}{d_n} \biggr).
\]
\begin{pf*}{Proof of Theorem~\ref{thmdeloc}}
%Recall that $\Omega_3(n)$ is the event
%1- \delta\}.
Consider the set $J(n)$, defined in Lemma~\ref{treeapprox}, of
vertices whose $\zeta_n$-neighborhoods are acyclic. Recall the event
$\Omega(n)$, whose probability (as soon as $n$ is large enough) is $1
- o(1/n)$, which is that $|J(n)|/n > 1 - \eta_n/d_n$.

We first prove part (i). The event $\Omega_1(n)$ can be decomposed as
two disjoint events depending on whether the set $T_n$ is a subset of
$J(n)$ or not. We first examine the event
\[
\Omega'_1(n)= \{ T_n \subseteq J(n) \} \cap\Omega_1(n)
\]
for purposes of exclusion.

Assume $\omega\in\Omega(n) \cap\Omega'_1(n)$, and fix $i, v_i$
depending on $\omega$ and $n$. The matrix $A_n$ has $n$ real
eigenvalues and corresponding\vspace*{2pt} eigenspaces. The top eigenvalue is $d_n$
with corresponding eigenvector $v_1=n^{-1/2}\one$, which is completely
delocalized. Let us now choose $v_2, v_3, \ldots, v_n$ to be a set of
normalized eigenvectors, corresponding, respectively, to the eigenvalues
$\lambda_2, \lambda_3, \ldots, \lambda_n$ of $\frac{1}{\sqrt
{d_n-1}}A_n$. Fix a subset $T_n\subseteq\{1,2,\ldots,n\}$ of size
$L_n$ as stated in the theorem.

Consider again the Stieltjes transform of the matrix $(d_n-1)^{-1/2}
A_n$ as in the proof of Theorem~\ref{stielapprox}. That is, for $z \in
\mathbb{C}$, with $\Im(z) > 0$, consider the matrix\vadjust{\goodbreak} $(\frac
{1}{\sqrt{d_n-1}}A_n-z )^{-1}$. Then by the spectral
representation it follows that for any $1\le k \le n$ we get
\[
\biggl( \frac{1}{\sqrt{d_n-1}}A_n - z \biggr)^{-1}(k,k)= \sum
_{j=1}^n \frac{v_j^2(k)}{( \lambda_j - z )}.
\]
Summing over the vertices $k \in T_n$, we get
%
%e47 ###
%
\begin{equation}\label{gfnbnd}
\sum_{k \in T_n} \biggl( \frac{1}{\sqrt{d_n-1}}A_n - z
\biggr)^{-1}(k,k)= \sum_{j=1}^n \frac{\sum_{k \in T_n} v_j^2(k)}{(
\lambda_j - z )}.
\end{equation}

Taking the imaginary part on both sides, we get
%
%e48 ###
%
\begin{eqnarray}\label{ineq42}
\Im\biggl( \sum_{k \in T_n} \biggl( \frac{1}{\sqrt{d_n-1}}A_n - z
\biggr)^{-1}(k,k) \biggr) &=& \sum_{j=1}^n \sum_{k \in T_n}
v_j^2(k)\Im\biggl( \frac{1}{ \lambda_j - z }\biggr)\nonumber\\
&\ge& \sum_{k \in T_n} v_i^2(k)\Im( \lambda_i - z
)^{-1} \\
&\ge&(1-\delta)\Im( \lambda_i - z )^{-1}.
\nonumber
\end{eqnarray}

Now we use the fact that for $\omega\in\Omega(n)$ and for all $k\in
J(n)$, we get from equation (\ref{errork1}) that
\[
\sup_{z\in U_n} \biggl\vert\biggl( \frac{1}{\sqrt{d_n-1}}A_n - z \biggr)^{-1}(k,k) -
s(z) \biggr\vert \le\frac{C}{d_n}
\]
for some absolute constant $C > 0$. Recall that $s(z)$, the Stieltjes
transform of the semicircle density, is given by
\[
s(z)=\tfrac{1}{2}\bigl( -z + \sqrt{z^2-4} \bigr).
\]
Thus, summing up over all $k$ in $T_n$ we get
\[
\sup_{z\in S_n} \biggl\vert\sum_{k \in T_n} \biggl( \frac{1}{\sqrt {d_n-1}}A_n
- z \biggr)^{-1}(k,k) - L_ns(z) \biggr\vert \le\frac{CL_n}{d_n}.
\]
Combining this estimate with (\ref{ineq42}) for $z$ such that $\Re
(z)=\lambda_i$ and $\Im(z)=\eta_n$ we get
\[
L_n\Im(s(z)) + CL_n/d_n \ge\eta_n^{-1}(1-\delta)
\]
for all $\omega\in\Omega(n) \cap\Omega'_1(n)$.

Since $\Im(s(z))$ is bounded and $L_n = o(\eta_n^{-1})$,
there is a large enough $N$ such that for all $n\ge N$, this inequality
will not hold when $\omega\in\Omega'_1(n)$. So we get that
%
%e49 ###
%
\begin{equation} \label{zeroprob}
P\bigl(\Omega(n) \cap\Omega_1'(n)\bigr) = 0
\end{equation}
as soon as $n$ is large enough.

We can now write
\[
\Omega_1(n) = \bigl((\Omega(n))^c \cap\Omega_1(n) \bigr)\cup
\bigl(\Omega(n) \cap\Omega_1(n) \bigr);
\]
the probability of the first of the two events above is bounded by the
probability of $(\Omega(n))^c$ which is $o(1/n)$. Further, by (\ref
{zeroprob}), we get that
\[
\Omega(n) \cap\Omega_1(n) = \Omega(n) \cap\bigl(\Omega_1(n)
\setminus\Omega_1'(n)\bigr).
\]
Note that the last event is equivalent to saying that $\Omega(n)$ and
$\Omega_1(n)$ happen, and that in addition $T_n \not\subseteq J(n)$.

%Using now the fact that $T_n \not\subseteq J(n)$ is independent of all
%other events, w
We now bound
\begin{eqnarray*}
P\bigl[ \Omega(n) \cap\bigl(\Omega_1(n) \setminus\Omega_1'(n)\bigr)
\bigr] & \leq& P[\Omega(n) \cap\{T_n \not\subseteq J(n)\} ]
\\
& = & P [ \{T_n \not\subseteq J(n)\}\mid\Omega(n) ] P
[ \Omega(n) ] \\
& = & \biggl( 1 - o \biggl( \frac{1}{n} \biggr) \biggr) P [ \{
T_n \not\subseteq J(n)\} \mid\Omega(n) ].
\end{eqnarray*}
Note now that, given the size $j = |J(n)|$, any set of $j$ labels
chosen from $\{1, \ldots, n\}$ is just as likely as any other, and
independent of the set $T_n$. So
%
%e50 ###
%
\begin{eqnarray} \label{firstest}
P [ \{T_n \not\subseteq J(n) \} \mid|J(n)| = j ] & = & 1
- P [ \{T_n \subseteq J(n) \} \mid|J(n)| = j ] \nonumber\\
& = & 1 - \frac{{n - L_n \choose j - L_n}}{{n \choose j}} \\
& \leq& 1 - \frac{(j - L_n)^{L_n}}{n^{L_n}}.\nonumber
\end{eqnarray}
Since
\[
P [ \{T_n \not\subseteq J(n)\} \mid\Omega(n) ] = P
\biggl[ \{T_n \not\subseteq J(n)\} \Bigm|\frac{|J(n)|}{n} > 1 - \frac{\eta
_n}{d_n} \biggr],
\]
it follows from (\ref{firstest}) that
\[
P [ \{T_n \not\subseteq J(n)\} \mid\Omega(n) ] \leq
1 - \biggl(1 - \frac{\eta_n}{d_n} - \frac{L_n}{n} \biggr)^{L_n},
\]
and since $L_n = o(1/\eta_n)$ and thus $L_n/n = o(\eta_n/d_n)$, it
follows that
%
%e51 ###
%
\begin{equation} \label{secondest}
P [ \{T_n \not\subseteq J(n)\} \mid\Omega(n) ]
\leq 1 - e^{-L_n \eta_n/d_n} \biggl( 1 - o \biggl(\frac{1}{d_n}
\biggr) \biggr). %= o ( \frac{1}{d_n} ).
\end{equation}
Putting all of these together, we conclude that
\[
P [ \Omega_1(n) ] \leq1 - e^{-L_n \eta_n / d_n} \biggl(
1 - o \biggl( \frac{1}{d_n} \biggr) \biggr),
%o ( \frac{1}{n} ) + o (\frac{1}{d_n} ),
\]
%
%which is equivalent to
%P [ (\Omega_1(n))^c ] \geq1 - o (\frac{1}{d_n}
%).
%Using again that $P(\Omega_1(n) \cup\Omega_2(n)) = 1 - o(d_n^2/n)$,
%and the fact that $T \subseteq J(n)$ is independent of all other
%events, we now obtain that
%)( 1 - o( \frac{d_n^2}{n} ) ).
%Since the labeling of the vertices in $G(n,d_n)$ is assigned at random,
%P(T_n \subseteq J(n)) = \combi{\abs{J(n)}}{L_n}/\combi{n}{L_n}.
%By taking into account that $n \geq J(n) \geq n ( 1 - \frac{
%$L_n = o(1/\eta_n)$, we finally obtain that
%P((\Omega_3(n))^c) &\geq\frac{\combi{\abs{J(n)}}{L_n}}{
%& = (1 - \frac{\eta_n}{d_n} - \frac{L_n}{n})^{L_n} =
%e^{-L_n \eta_n/d_n} ( 1 - o (1/d_n ) ).
which completes the proof.

Note that part (ii) is proved in (\ref{zeroprob}).
\end{pf*}

\begin{appendix}\label{app}
%s5 ###
\section*{Appendix: Eigenvalues and eigenvectors of regular~trees}

Our main step in the above proofs was to understand the Stieltjes
transforms or the resolvent matrix of a finite tree where every
nonleaf vertex has $(d-1)$ children. It is quite straightforward to
compute all the eigenvalues of such a tree. Explicit eigenvalues of the
tree give us ideas about spacing distribution of eigenvalues of the
random regular graph. Fix a positive integer $d \ge2$.
\begin{lemma}\label{treeeigs}
Let $\tree$ be a finite ordered rooted tree of depth $\zeta\in
\mathbb{N}$ such that every vertex has exactly $(d-1)$ \textit
{children}. That is, the root has degree $(d-1)$, and every other
vertex, other than the leaves, has degree $d$. Let $H$ denote the
adjacency matrix of this graph.
\begin{longlist}[(ii)]
\item[(i)] Then, for any complex number $z$ the characteristic
polynomial of $H$ is given by
\[
\Delta(z;\zeta):=\det\biggl( z I - \frac{1}{\sqrt{d-1}} H \biggr)
= U_{\zeta+1}(z/2)\prod_{i=1}^{\zeta} U_{\zeta+1-i}^{(d-1)^{i} -
(d-1)^{i-1}}(z/2).
\]
\item[(ii)] The eigenvalues of the adjacency matrix are given by the
following collection. Consider $i=1,2\ldots, \zeta$: then twice the
zeros of the Chebyshev polynomial $U_{\zeta+1-i}$ appears with
multiplicity $(d-1)^i - (d-1)^{i-1}$. For $i=0$, the multiplicity is one.
\end{longlist}
\end{lemma}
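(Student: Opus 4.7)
The proof rests on exploiting the recursive block structure that underlies the continued-fraction calculation in Lemma \ref{treeform}, and turning it into a multiplicative recursion for the characteristic polynomial.

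First I would set up the block decomposition. The tree $\tree$ of depth $\zeta$ consists of a root joined to $(d-1)$ children, each of which is itself the root of a tree of exactly the same type (root of degree $d-1$, internal nodes of degree $d$) of depth $\zeta-1$. With the recursive labeling from the paper this gives, exactly as in \eqref{blockrep},
\[
zI-\frac{1}{\sqrt{d-1}}H \;=\; \begin{bmatrix} zI-\frac{1}{\sqrt{d-1}}H^{(1)}_{\zeta-1} & & & -u_1\\ & \ddots & & \vdots\\ & & zI-\frac{1}{\sqrt{d-1}}H^{(d-1)}_{\zeta-1} & -u_{d-1}\\ -u_1' & \cdots & -u_{d-1}' & z\end{bmatrix},
\]
where each $u_i$ has a single nonzero entry $(d-1)^{-1/2}$ at the root position of the $i$th subtree.

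Next I would apply the Schur complement formula along the last row/column. This yields
\[
\Delta(z;\zeta) \;=\; \Delta(z;\zeta-1)^{d-1}\cdot\Bigl(z-\sum_{i=1}^{d-1}u_i'\bigl(zI-\tfrac{1}{\sqrt{d-1}}H^{(i)}_{\zeta-1}\bigr)^{-1}u_i\Bigr).
\]
Since each $u_i$ is supported at the root of its subtree, the inner product reduces to $\frac{1}{d-1}$ times the root-root entry of the resolvent of the depth-$(\zeta-1)$ tree. By part (ii) of Lemma \ref{treeform} this equals $\frac{1}{d-1}\cdot\frac{U_{\zeta-1}(z/2)}{U_{\zeta}(z/2)}$, and summing over the $d-1$ children the $\frac{1}{d-1}$ cancels. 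Using the Chebyshev recursion $U_{\zeta+1}(z/2)=zU_{\zeta}(z/2)-U_{\zeta-1}(z/2)$ from \eqref{chebyrec} collapses the scalar factor to $U_{\zeta+1}(z/2)/U_{\zeta}(z/2)$, so
\[
\Delta(z;\zeta) \;=\; \Delta(z;\zeta-1)^{d-1}\cdot\frac{U_{\zeta+1}(z/2)}{U_{\zeta}(z/2)}.
\]

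Then I would verify the claimed formula by induction on $\zeta$. The base case $\zeta=0$ gives $\Delta(z;0)=z=U_1(z/2)$, matching the formula with an empty product. Assuming the formula at depth $\zeta-1$, raising to the $(d-1)$th power shifts each exponent $(d-1)^i-(d-1)^{i-1}$ to $(d-1)^{i+1}-(d-1)^{i}$, i.e.\ shifts the product index by one; multiplying by $U_{\zeta+1}(z/2)/U_{\zeta}(z/2)$ then precisely fills in the missing $i=1$ factor $U_{\zeta}^{(d-1)-1}(z/2)$ and inserts the new leading factor $U_{\zeta+1}(z/2)$, which is the claim at depth $\zeta$. Part (ii) is then immediate: the roots of $U_{\zeta+1-i}(z/2)$ are $2\cos\bigl(\frac{k\pi}{\zeta+2-i}\bigr)$ for $k=1,\dots,\zeta+1-i$, and they appear in $\Delta(z;\zeta)$ with the stated multiplicities; a quick check that $(\zeta+1)+\sum_{i=1}^{\zeta}(\zeta+1-i)((d-1)^i-(d-1)^{i-1})$ equals $1+(d-1)+\cdots+(d-1)^{\zeta}$, the number of vertices, confirms consistency.

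The only real obstacle is bookkeeping: keeping the index shift in the induction straight, and correctly interpreting the $i=0$ case as contributing the outer $U_{\zeta+1}(z/2)$ factor with multiplicity one. The analytic content is entirely in the identification of the Schur complement scalar with $U_{\zeta+1}/U_{\zeta}$, which is already essentially contained in the continued-fraction computation of Lemma \ref{treeform}.
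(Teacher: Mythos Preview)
Your proposal is correct and follows essentially the same route as the paper: both set up the block decomposition of $zI-\tfrac{1}{\sqrt{d-1}}H$, take the Schur complement along the root row/column, identify the resulting scalar as $U_{\zeta+1}(z/2)/U_{\zeta}(z/2)$ via the root--root resolvent computation of Lemma \ref{treeform} and the Chebyshev recursion, and then unwind the multiplicative recursion $\Delta(z;\zeta)=\Delta(z;\zeta-1)^{d-1}\cdot U_{\zeta+1}/U_{\zeta}$. The only cosmetic difference is that the paper iterates the recursion into a telescoping product of ratios and then simplifies, whereas you verify the closed form directly by induction; these are equivalent bookkeeping choices.
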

\begin{pf}
Recall the recursive labeling of vertices as given in Section~\ref{local1}.

To prove conclusion (i), note that when $\zeta=0$ (i.e., the tree has
only the root vertex), the equality is trivially true. We proceed by
induction. Suppose the equality is true until depth $\zeta-1$.
Consider a tree of depth $\zeta$, and label the adjacency $H$ matrix
as above. Thus
%
%e52 ###
%
\begin{eqnarray}
&&z-\frac{1}{\sqrt{d-1}}H \nonumber\\[-4pt]\\[-12pt]
&&\qquad=
{\fontsize{10.7pt}{11pt}\selectfont{
\left[\matrix{
z-\dfrac{1}{\sqrt{d-1}}H_1 & & & & \vspace*{2pt}\cr
& z-\dfrac{1}{\sqrt{d-1}}H_2 & & & -u \vspace*{2pt}\cr
& & \cdots& &\vspace*{2pt}\cr
& & & z-\dfrac{1}{\sqrt{d-1}}H_{d-1} &\vspace*{2pt}\cr
&& -u' && z}\right].}}\hspace*{-20pt}\nonumber
\end{eqnarray}
Here $u$ is the column vector representing the children of the root.
Notice that $u$ is $(d-1)^{-1/2}$ exactly at the $(d-1)$ coordinates
which are the last elements in each of the block matrices $H_1, \ldots
, H_{d-1}$ and zero elsewhere. The vector $u'$ is the transpose of $u$.

We now use the following well-known formula of determinant of block
matrices [akin to (\ref{invform1})]:
\[
\det
\left[\matrix{
A & B \cr
C & D}\right]
= \det(A) \det( D - C A^{-1} B ).
\]

We apply this to the matrix $z - (d-1)^{-1/2} H$ treating the the final
element~$[z]$ as one block:
note that, by our labeling, in this case $A$ is a diagonal block
matrix, and hence its determinant is a product of the determinants of
the individual blocks which are all the same and equal to $\Delta
(z;\zeta-1)$.
Thus we get
\[
\Delta(z;\zeta) = \bigl( \Delta(z;\zeta-1) \bigr)^{d-1}(z
- u' A^{-1} u ).
\]

As shown in Section~\ref{local1}, the quantity
\begin{eqnarray*}
(z - u' A^{-1} u )&=& z + \varphi(\zeta-1)= z - \frac
{U_{\zeta-1}(z/2)}{U_{\zeta}(z/2)}\\
&=& \frac{z U_{\zeta} - U_{\zeta
-1}}{U_{\zeta}}=\frac{U_{\zeta+1}(z/2)}{U_{\zeta}(z/2)}.
\end{eqnarray*}
Here $U_n$ is the Chebyshev polynomial of the second kind. Note the
reversal of sign from Section~\ref{local1} which is due to current
reversal of sign from the resolvent matrix.

Hence
\[
\Delta(z;\zeta) = \frac{U_{\zeta+1}(z/2)}{U_{\zeta}(z/2)} \bigl(
\Delta(z;\zeta-1) \bigr)^{d-1} = \prod_{i=0}^{\zeta} \biggl[\frac
{U_{\zeta+1-i}(z/2)}{U_{\zeta-i}(z/2)}\biggr]^{(d-1)^i}.
\]
The last term can be verified from the initial conditions of the
Chebyshev polynomials.

Simplifying a bit more, we get
\[
\Delta(z;\zeta) = U_{\zeta+1}(z/2)\prod_{i=1}^{\zeta} U_{\zeta
+1-i}^{(d-1)^{i} - (d-1)^{i-1}}(z/2).
\]

For part (ii), note from above that the $i$ many zeroes of $U_i$ appear
with multiplicity $(d-1)^i - (d-1)^{i-1}$. Hence the total number of
eigenvalues are
\[
\zeta+1 + \sum_{i=1}^{\zeta} (\zeta+2-i)[ (d-1)^i -
(d-1)^{i-1} ] = \sum_{i=0}^{\zeta} (d-1)^{i},
\]
which is the total number of vertices of the tree.
\end{pf}

The zeros of Chebyshev polynomials of order $k$ can be easily shown to
be given by
\[
\cos\biggl( \frac{j\pi}{k+1} \biggr),\qquad j=1,2,\ldots,k.
\]

Thus an interesting phenomenon transpires in this analysis. If we drop
the multiplicities and consider the empirical distribution of the
distinct eigenvalues, they are the zeros of the Chebyshev polynomials
of increasing order. These zeros are cosine transformations of
equidistant points on the unit circle; and hence their empirical
distribution converges to the arc-sine law. However, the entire
empirical spectral distribution converges (see~\cite{bordlelarge}) to
the spectral distribution of the infinite tree which is the semicircle
law. The effect of the multiplicities is strong enough to flip the
``smile'' of the arc-sine law to the ``frown'' of the semicircle! Also
note that the gap of the spectrum from $2$ is about twice of $\pi
^2/(\zeta+1)^2$, and does not depend on $d$.

Some facts about eigenvectors of this tree are also easy to derive and
might be also worthwhile to look at. For example, by the spectral
theorem, one can write
%
%e53 ###
%
\begin{equation}\label{eigenvectree}
-\frac{U_{\zeta}(z/2)}{U_{\zeta+1}(z/2)}=\varphi(z) = \biggl( \frac
{1}{\sqrt{d-1}} H - z \biggr)^{-1}_{\Root, \Root} = \sum_{i} \frac
{\Vert P_i e_{\Root} \Vert^2}{\lambda_i - z}.
\end{equation}
Here the sum on the right goes over distinct eigenvalues of the
adjacency matrix and $P_i e_{\Root}$ refers to the projection of the
vector $e_{\Root}$ on the eigenspace corresponding to $\lambda_i$.

Notice that the above is a meromorphic function of $z$. From the
leftmost expression in (\ref{eigenvectree}), it is obvious that the
function has poles at (twice) the zeros of $U_{\zeta+1}$. It follows
then that $P_i e_{\Root}$ is zero for all eigenvalues except when
$\lambda_i$ is twice of a root of $U_{\zeta+1}$. However, these roots
are simple, as we show in the previous lemma. Hence, $\Vert P_i
e_{\Root} \Vert^2$ is precise the square of the ``root''-coordinate of the
$i$th eigenvector.

Its value can be easily computed. For any root $\lambda_i$ of
$U_{\zeta+1}$, we get
\[
\Vert P_i e_{\Root} \Vert^2 = \lim_{z \rightarrow2\lambda_i} (z -
2\lambda_i)\frac{U_{\zeta}(z/2)}{U_{\zeta+1}(z/2)} = \frac
{2U_{\zeta}(\lambda_i)}{U_{\zeta+1}'(\lambda_i)}.
\]
Here\vspace*{2pt} $U'_{\zeta+1}$ refers to the derivative of the polynomial
$U_{\zeta+1}$. Other coordinates can be similarly derived.
\end{appendix}

\section*{Acknowledgments}
It is our pleasure to thank Chris Burdzy,
Sourav Chatterjee, Manju Krishnapur, Nati Linial and Sasha Soshnikov
for very useful discussions. Both authors would like to thank Tobias
Johnson for a~very careful reading of this paper. Ioana is grateful to
MSRI for their hospitality during the Fall 2010 quarter, as part of the
program \textit{Random Matrix Theory}, \textit{Interacting Particle Systems and
Integrable Systems}, during which this work was completed.

%suskaldyti doi

% imsref loaded by lrinkeviciute, 2011-06-09 10:26:01
% imsref loaded by lrinkeviciute, 2011-06-09 10:27:45
%
% imsref loaded by lrinkeviciute, 2011-06-10 08:27:27
% imsref loaded by lrinkeviciute, 2011-06-15 10:07:29
% imsref loaded by lrinkeviciute, 2011-06-20 15:35:17

%
\printaddresses

\end{document}